\def\QQ{{\mathbb Q}}
\def\Q{{\mathcal Q}}
\def\J{{\mathcal J}}
\def\R{{\mathcal R}}
\def\P{{\mathcal P}}
\def\A{{\mathcal A}}
\def\B{{\mathcal B}}
\definecolor{green2}{rgb}{0.18, 0.545, 0.341}  
\newcommand{\N}{\mathbb{N}}
\newcommand{\M}{\mathbb{M}}
\newcommand{\sI}{\mathcal{I}}
\newcommand{\li}{\mathcal{L}}
\newcommand{\rk}{\operatorname{rk}}
\newtheorem{theo}{Theorem}[]
\newtheorem{prop}[theo]{Proposition}
\newtheorem{lemm}[theo]{Lemma}
\newtheorem{coro}[theo]{Corollary}
\title{Poset structures on $(m+2)$-angulations and polynomial bases of
  the quotient by $G^m$-quasisymmetric functions}
\author{Jean-Christophe Aval \& Frédéric Chapoton}
\date{\today}
\begin{document}
\maketitle


\begin{abstract}
  For integers $m, n \geq 1$, we describe a bijection sending
  dissections of the $(m n + 2)$-regular polygon into $(m+2)$-sided
  polygons to a new basis of the quotient of the polynomial algebra in
  $m n$ variables by an ideal generated by some kind of higher
  quasi-symmetric functions. We show that divisibility of the basis
  elements corresponds to a new partial order on dissections, which is
  studied in some detail.
\end{abstract}

\section{Introduction}

Let $m\geq 1$ be an integer. For every integer $n \geq 1$, we define a
simple poset structure $\P_{m,n}$ on the set of $(m+2)$-angulations of
a $(m n+2)$-gon. This generalizes the construction by Pallo on
triangulations \cite{pallo03}, which is closely related to the Tamari
lattice. 

Quasisymmetric functions, a generalisation of symmetric functions,
were introduced in \cite{gessel84} and are now classical in algebraic
combinatorics. Some higher analogues were introduced in
\cite{poirier1998} and further studied in \cite{BH08}. We recall
some results about quotients of polynomials rings by higher
quasi-symmetric functions, first obtained for quasisymmetric functions in
\cite{ab2003,abb2004} and extended to the higher case in \cite{aval06}.

We then show that the poset $\P_{m,n}$ is isomorphic to the
divisibility poset of a new particular basis of the quotient of the
polynomial ring in $m$ sets of $n$ variables by the ideal generated by
$G^m$-quasisymmetric functions without constant term. Our description
of a new basis builds upon the basis indexed by $m$-Dyck paths that was
introduced for general $m$ in \cite{aval06}.

For $m=1$, the posets $\P_{1,n}$ were introduced by Pallo
in \cite{pallo03} and further studied in \cite{csar12,csar14}, and the
basis indexed by triangulations was defined in \cite{chapoton05}, but
the connection between them is new.

The last two sections of the article are devoted to some enumerative
results on the posets (enumeration of intervals, rank generating
function) and to a recursive description of the intervals as distributive
lattices of orders ideals of forests.

\section{A poset structure on $M$-angulations}

Let $m\geq 1$ be an integer. For the sake of readability, we will use
the expression $M$-angulation instead of $(m+2)$-angulation. An
$M$-angulation of a regular convex polygon is a set of diagonals that
cut the polygon into $(m+2)$-sided polygonal regions.

We consider the set $\Q_{m,n}$ of $M$-angulations of a $(m n+2)$-gon.
Every element of $\Q_{m,n}$ contains $n$ regions, separated
by $n-1$ diagonal edges.  The cardinality of $\Q_{m,n}$ is given by
$\frac{1}{m n+1}\binom{(m+1)n}{n}$, the number of $(m+1)$-ary planar
rooted trees with $n$ inner vertices (often called a Fuss-Catalan number). A simple bijection between these
two classes of objects is given by planar duality. Some elements of
$\Q_{2,7}$ are shown in Figures \ref{fig:quad} and \ref{fig:fan}.
\begin{figure}[ht!] 
\centering \includegraphics[scale=0.5]{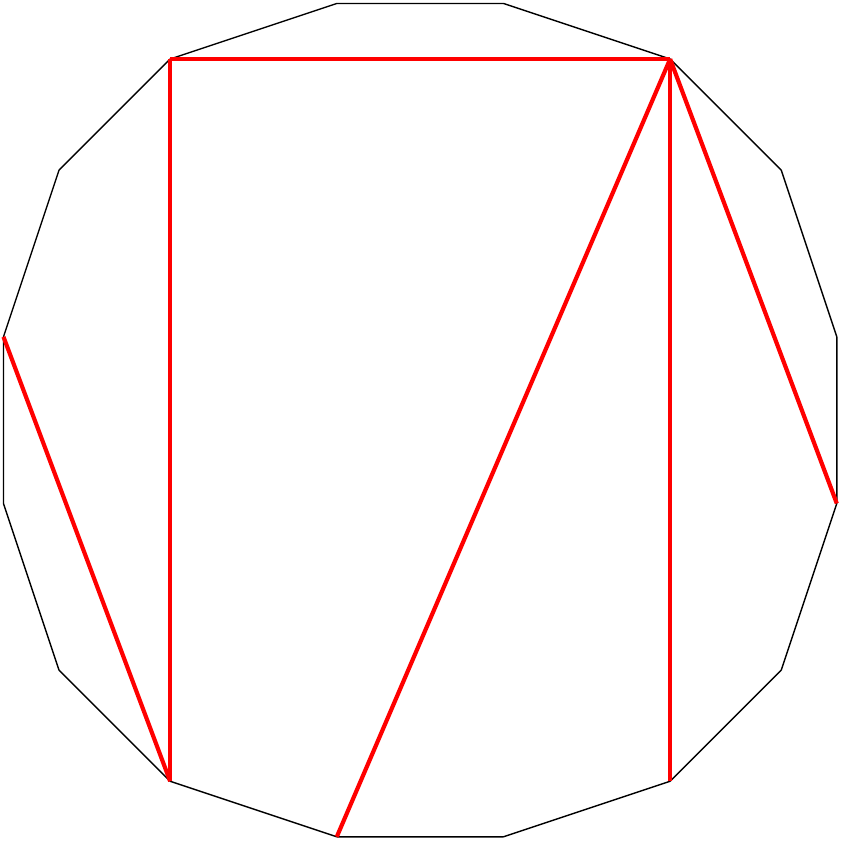} 
\caption{A quadrangulation of the 16-gon}
\label{fig:quad}
\end{figure}

To define a poset structure on $\Q_{m,n}$, we fix a particular element $Q_0$,
which is a fan (every diagonal edge involves a fixed vertex, denoted $0$ and called the \textit{apex},
see Figure \ref{fig:fan}).
\begin{figure}[ht!] 
\centering \includegraphics[scale=0.5]{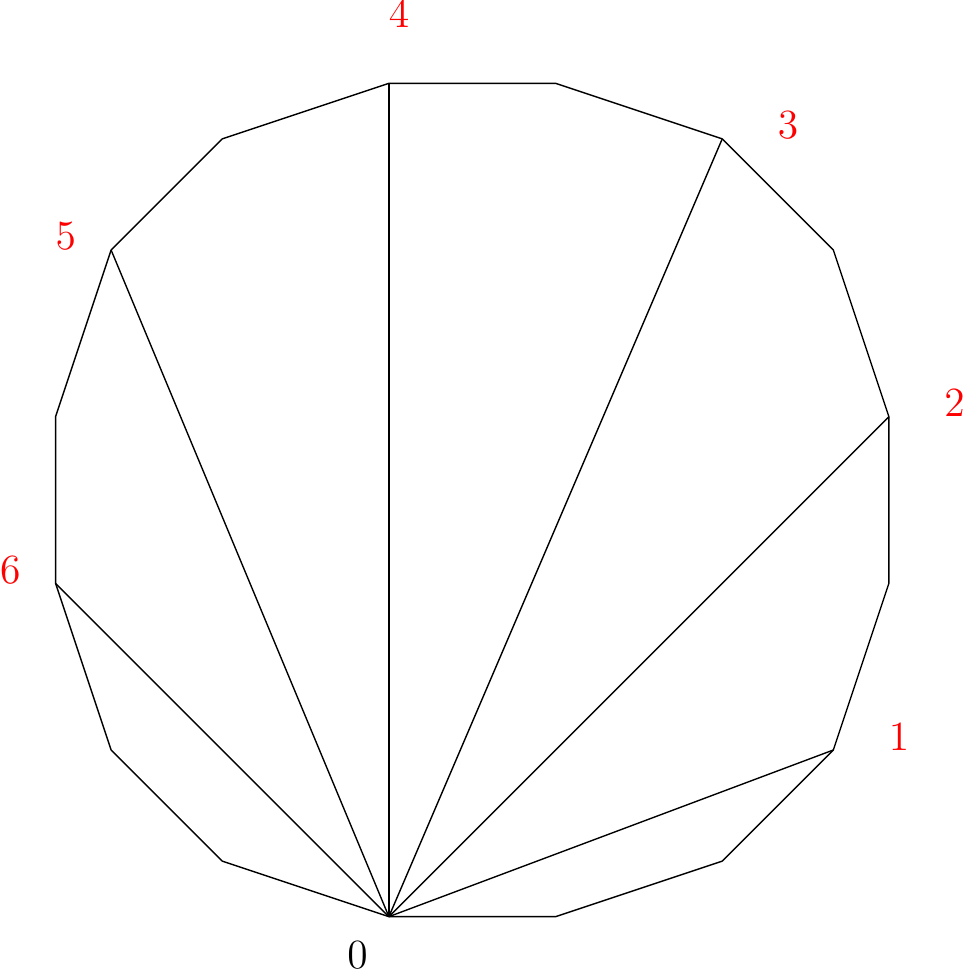}
\caption{The fan $Q_0$ for $m=2$ and $n=7$}
\label{fig:fan}
\end{figure}

We consider the following order relation.  An element $Q$ of
$\Q_{m,n}$ is covered in $\P_{m,n}$ by the $M$-angulations obtained by
flipping one of its diagonal edges included in $Q_0$. Here flipping
means removing this diagonal edge and replacing it by another diagonal
edge cutting again the $(2 m+2)$-gon created by the removal into two
$(m+2)$-gons. Note that any diagonal edge may be flipped in exactly
$m$ different ways.  As an example the poset $\P_{2,3}$ is shown on
Figure \ref{fig:P3}.
\begin{figure}[ht!] 
  \centering \includegraphics[scale=1.2]{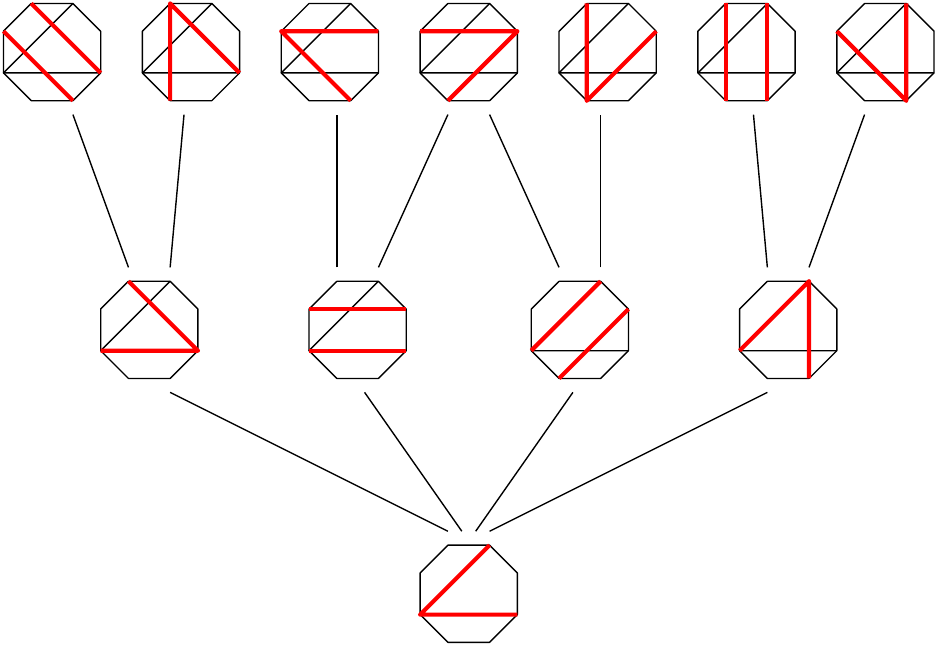} 
  \caption{The poset $\P_{2,3}$}
  \label{fig:P3}
\end{figure}

This poset is clearly graded, the rank of an element $Q$ being $n-1$
minus the number of diagonal edges shared by $Q_0$ and $Q$. This is also
the number of diagonal edges in $Q$ that are not in $Q_0$.

It follows from the description of the coverings that an element of
rank $r$ is covered by exactly $m(n-1-r)$ elements. This implies the
formula $m^{n-1}(n-1)!$ for the number of maximal chains in
$\P_{m,n}$.

This poset has $Q_0$ as unique minimum. The fact that $Q_0$ is smaller
than all $M$-angulations follows from the next lemma by induction on
the number of diagonals that are both in $Q$ and $Q_0$. This lemma
implies that unless $Q$ is $Q_0$, there is a cover relation
$Q' \triangleleft Q$ where $Q'$ has one more diagonal in common with
$Q_0$.

\begin{lemm}
  \label{descente_possible}
  Let $Q$ be distinct from $Q_0$. Then there is always at least one
  diagonal $d_0$ in $Q_0$ that cuts exactly one diagonal of $Q$.
\end{lemm}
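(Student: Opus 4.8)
The plan is to produce the diagonal $d_0$ explicitly, by locating a suitable region of $Q$ and invoking a congruence‑mod‑$m$ rigidity of dissections into $(m+2)$-gons.

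\emph{Setup and reduction.} Label the vertices of the polygon $0,1,\dots,mn+1$ cyclically, the apex being $0$, so that the diagonals of $Q_0$ are exactly $\{0,jm+1\}$ for $1\le j\le n-1$ (forced by the condition that $Q_0$ cut the polygon into $(m+2)$-gons). A diagonal of $Q_0$ meeting $Q$ only in an endpoint contributes nothing, a diagonal of $Q$ through $0$ never crosses $\{0,c\}$, and a diagonal $\{x,y\}$ of $Q$ not through $0$ crosses $\{0,c\}$ precisely when $x<c<y$. So it suffices to find $c\equiv 1\pmod m$ with $m+1\le c\le (n-1)m+1$ such that exactly one diagonal $\{x,y\}$ of $Q$ satisfies $x<c<y$.

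\emph{Pockets.} For each diagonal $e=\{x,y\}$ of $Q$ not through $0$ (so $1\le x<y\le mn+1$), let $P_e$ be the sub-polygon on the vertex set $\{x,x+1,\dots,y\}$, i.e. the side of $e$ not containing $0$. Since $Q$ is non-crossing, the family $\{P_e\}$ is laminar. If $Q$ had no diagonal avoiding $0$, then all $n-1$ diagonals of $Q$ would pass through $0$ and the $(m+2)$-gon condition would force $Q=Q_0$; since $Q\ne Q_0$, there is at least one pocket, and we fix a maximal one $P_e$, $e=\{x,y\}$. Let $R$ be the region of $Q$ contained in $P_e$ and having $e$ as an edge; it is an $(m+2)$-gon, with vertices $x=u_0<u_1<\dots<u_{m+1}=y$, where $e=\{u_0,u_{m+1}\}$ and, for $0\le i\le m$, the pair $\{u_i,u_{i+1}\}$ is either a boundary edge or the ``top'' diagonal of the sub-pocket $P_{\{u_i,u_{i+1}\}}$ on the vertices $\{u_i,\dots,u_{i+1}\}$.

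\emph{Congruence and conclusion.} A convex polygon on $k$ vertices admits a dissection into $(m+2)$-gons only if $k\equiv 2\pmod m$; applied to each sub-pocket $P_{\{u_i,u_{i+1}\}}$ this gives $u_{i+1}-u_i\equiv 1\pmod m$ (also trivially true when $u_{i+1}=u_i+1$). Hence $u_i\equiv u_0+i\pmod m$, so $u_1,\dots,u_m$ run through all residues mod $m$; let $c^*$ be the unique one with $c^*\equiv 1\pmod m$. From $x=u_0<c^*<u_{m+1}=y\le mn+1$ and $x\ge 1$ the congruence forces $m+1\le c^*\le (n-1)m+1$, so $\{0,c^*\}$ is a diagonal of $Q_0$. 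Finally, a diagonal $\{x',y'\}$ of $Q$ is crossed by $\{0,c^*\}$ iff $c^*$ lies in the interior of the pocket $P_{\{x',y'\}}$; by laminarity these pockets are nested, $P_e$ is one of them, and any other would be a sub-pocket of $P_e$ with $c^*$ in its interior — impossible, since $c^*=u_i$ is a boundary vertex of each sub-pocket of $P_e$ adjacent to $R$ and lies outside all others — while $P_e$, being maximal, lies inside no larger pocket. Thus $\{0,c^*\}$ crosses exactly the single diagonal $e$, and we take $d_0=\{0,c^*\}$. The crux is the mod‑$m$ rigidity (the step $u_{i+1}-u_i\equiv 1\pmod m$) combined with the right choice of region — a maximal pocket and the region just inside it — so that its middle vertices are forced into the residue class of the apex diagonals while enclosed by exactly one diagonal of $Q$; the rest is bookkeeping with the laminar structure.
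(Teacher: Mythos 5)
Your proof is correct. It follows the same basic strategy as the paper's --- locate a region $R$ of $Q$ separated from the apex side by a single diagonal $e$ of $Q$, show that the intermediate vertices of $R$ represent every residue class modulo $m$ exactly once, and pigeonhole against the fact that the endpoints of the diagonals of $Q_0$ form one residue class --- but the implementation genuinely differs in two respects. First, the paper begins by reducing, via induction, to the case where $Q$ and $Q_0$ have no common diagonal, and then takes for $R$ the region adjacent to the region containing the apex; you avoid this reduction altogether by choosing a maximal pocket, which handles common diagonals automatically (for the fan these are exactly the diagonals of $Q$ through $0$, which never cross $\{0,c\}$). Second, the paper proves the residue-class property of $R$ by a leaf-removal induction on the tree structure of the $M$-angulation, whereas you deduce it from the elementary congruence that a polygon dissectable into $(m+2)$-gons has $\equiv 2 \pmod m$ vertices, applied to each sub-pocket hanging off $R$; this is more direct and self-contained. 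The only compressed step is the final bookkeeping: to exclude a second crossed diagonal you implicitly use that any pocket properly contained in $P_e$ lies inside one of the sub-pockets on $\{u_i,\dots,u_{i+1}\}$ (a diagonal of $Q$ inside $P_e$ other than $e$ cannot pass through the face $R$, nor join two non-adjacent vertices of $R$), after which $x'<c^*<y'$ would force $u_i<c^*<u_{i+1}$, contradicting that $c^*$ is one of the $u_j$; this is easy but worth stating. Note that in the paper's reduced case your maximal pocket is bounded by a diagonal of the apex region, so the two arguments ultimately single out the same region and the same $d_0$.
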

\begin{proof}


  One can assume without restriction that $Q$ and $Q_0$ have no common
  diagonal, otherwise one can find $d_0$ (by induction) inside one of
  the parts cut by the common diagonals. In at least one of these
  parts the restriction of $Q$ must differ from the restriction of
  $Q_0$, because $Q$ is not $Q_0$.

  Let us label all the vertices of the regular polygon
  counter-clockwise by integers, starting from the vertex $0$ which is
  the apex of $Q_0$.

  Because $Q$ and $Q_0$ have no common diagonal, there exists a unique
  region $R_0$ of $Q$ that contains the vertex $0$ in its
  boundary. Removing $R_0$ from the ambient polygon, one gets one or
  more convex polygons, all of them with $k m+2$ vertices for some
  $k$. Let us choose one of these polygons, and let $Q'$ be its
  $M$-angulation obtained from $Q$ by restriction. Let $R'$ be the
  unique region of $Q'$ which is adjacent to $R_0$.

  Excluding minimal and maximal indices, vertices of $Q'$
  form a sequence of $m k$ vertices numbered consecutively, in which
  every residue class modulo $m$ is represented exactly $k$ times.

  Because of the tree-like structure of $M$-angulations, one can build
  the $M$-angulation $Q'$ by successive additions of $M$-angles,
  starting from the region $R'$. Going backwards, one can go from $Q'$
  to $R'$ by removal of $M$-angles on the boundary.

  Using this leaf-removal induction, one can then prove that the
  boundary of $R'$ (excluding minimal and maximal indices) contains
  exactly one representative of every residue class modulo $m$.

  Note now that in $Q_0$, the vertices linked by a diagonal to the
  apex $0$ form a residue class modulo $m$, when numbered in the same
  way as the vertices of $Q$.

  It follows that exactly one of the vertices of $R'$ (excluding minimal and maximal indices) is the end of a
  diagonal $d_0$ in $Q_0$. The diagonal $d_0$ does only cross one diagonal
  of $Q$, namely the diagonal separating the regions $R_0$ and $R'$.
\end{proof}

The maximal elements in the poset $\P_{m,n}$ are the $M$-angulations that have no common
diagonal with $Q_0$. We will call them \textit{final $M$-angulations}.

Remark: There are several interesting existing families of posets on objects in
bijection with $(m+2)$-angulations, including $m$-Tamari lattices
\cite{bpr2012} and $m$-Cambrian lattices of type $A$
\cite{stw2015}. The posets introduced here seem to be new.

\section{$m$-analogues of $B$-quasisymmetric functions}

In \cite{BH08}, Baumann and Hohlweg introduced the ring of
$B$-quasisymmetric functions as the graded dual Hopf algebra of the
analog in type $B$ of Solomon's descent algebra. This ring is
contained in the polynomial ring in two sets of $n$ variables.

In \cite{aval06}, a quotient ring of this polynomial ring by an ideal
of $B$-quasi\-symmetric functions was studied. Moreover, for every
integer $m \geq 1$, an analog of the ring of $B$-quasisymmetric functions and
an analog of the quotient ring were also defined and studied, involving $m$
sets of $n$ variables.

We refer to \cite{aval06} for the original motivations of the study of
these rings and for the proof of the results that we will use.
Let us now summarize the results of \cite{aval06} in their most general form.

Let us denote by $\M=\{x,y,z,\dots,\omega\}$ a set of $m$ distinct
letters, endowed with a total order $x < y < z < \dots < \omega$.  We
will mostly illustrate our constructions with the cases $m=2$ and
$m=3$, therefore using only the letters $x,y,z$.

Let us start with polynomials in the union of $m$ alphabets of each
$n$ variables: $X_n=x_1,\dots,x_n$, $Y_n=y_1,\dots,y_n$, up to
$\Omega_n=\omega_1,\dots,\omega_n$. Denote this polynomial ring by
$\QQ[X_n,Y_n,\dots,\Omega_n]$. Inside this polynomial ring, one can
define a space of $G^m$-quasisymmetric functions, which reduces when $m=1$ to
the classical quasisymmetric functions.

In \cite{aval06}, a Gröbner basis for the ideal $\J_{m,n}$ generated
by constant-term-free $G^m$-quasisymmetric functions was described, and
from that was deduced a monomial basis for the quotient $\R_{m,n}$ of the
polynomial ring $\QQ[X_n,Y_n,\dots,\Omega_n]$ by $\J_{m,n}$.

This monomial basis for the quotient $\R_{m,n}$ is indexed by $m$-Dyck
paths, which gives the dimension
formula $\dim \R_{m,n}=\frac{1}{m n+1}\binom{(m+1) n}{n}$.

\subsection{Definitions}

For these definitions, we follow \cite{BH08}, with some minor differences, for the sake of simplicity of the computations we will have to make. The main change is to describe directly the general case for any $m \geq 1$ and not the special case $m=2$.

An {\em $m$-vector} of size $n$ is a vector
$v=(v_1,v_2,\dots,v_{m n-1},v_{m n})$ of length $m n$ with entries in
$\N$. One must think of $m$-vectors as the concatenation of $n$
sequences of length $m$. An {\em $m$-composition} is an $m$-vector in
which there is no sequence of $m$ consecutive zeros.

The integer $n$ is called the {\em size} of $v$. The {\em weight} of
$v$ is by definition the $m$-tuple $(w_1,\dots,w_m)$ where
$w_j=\sum_{i=0}^{n-1} v_{m i + j}$. We also set
$|v|=\sum_{i=1}^{m n} v_i$. For example $(1,0,2,1,0,2,3,0)$ is a
$2$-composition of size $4$, and of weight $(6,3)$.

To make notations lighter, we shall sometimes write $m$-vectors or
$m$-compo\-sitions as words with bars, where the bars separates the
word into $n$ blocks of length $m$. For example, $10|21|02|30$ stands
for the $2$-vector $(1,0,2,1,0,2,3,0)$ (see also the following
definition).

\vskip 0.3cm 

Let us now define the {\em fundamental $G^m$-quasisymmetric
  polynomials}, indexed by $m$-compositions. 

Let $c=(c_1,\dots,c_{m n})$ be an $m$-composition. One can decompose
$c$ as a concatenation of $n$ blocks of $m$ integers. Let us first
associate to $c$ a word $w_c$ in the alphabet
$\{x,y,z,\dots,\omega\}$, defined as the concatenation, over all
blocks $b=(b_1,\dots,b_m)$ of $c$, of the word
$x^{b_1} y^{b_2} \dots \omega^{b_m}$. For example, for the
$3$-composition $010|201$, one obtains $w_c = y x x z$ (powers are
written as repeated letters).

Then the fundamental $G^m$-quasisymmetric polynomial of index $c$ is
\begin{equation*}
F_{c}=\sum_{i} \prod_{t \in w_c} t_{i(t)},
\end{equation*}
where the sum is taken over all maps $i$ from the sequence of letters of
the word $w_c$ to the set $\{1,\dots,n\}$ such that $i$ is weakly increasing
inside every block of $c$ and strictly increasing between two blocks.

Let us give some examples for $m=2$:
\begin{align}
  F_{12} &=\sum_{i\le j\le k}x_i y_j y_k,\\
  F_{02|10} &=\sum_{i\le j< k}y_i y_j x_k.
\end{align}

It is clear from the definition that the multidegree ({\it i.e.} the $m$-tuple (degree in $x$, degree in $y$, ..., degree in $\omega$)) of $F_c$ in $\QQ[X_n,Y_n,\dots,\Omega_n]$ is the weight of $c$. If the size of $c$ is greater than $n$, we set $F_c=0$.

The space of $G^m$-quasisymmetric polynomials, denoted by
$QSym_n(G^m)$ is the vector subspace of the ring $\QQ[X_n,Y_n,\dots,\Omega_n]$
generated by the $F_c$, for all $m$-compositions $c$.

Let us denote by $\J_{m,n}$ the ideal $\langle QSym_n(G^m)^+\rangle$
generated by $G^m$-quasi\-symmetric polynomials with zero constant term.

\subsection{A monomial basis for $\R_{m,n}$}

\label{dyck_paths}

Let $v=(v_1,v_2,\dots,v_{m n-1},v_{m n})$ be an $m$-vector of size
$n$. We associate to $v$ a path $\pi(v)$ in the plane $\N\times\N$,
with steps $(0,1)$ (up step) or $(m,0)$ (right step). We start from $(0,0)$ and for each
entry $v_i$ (read from left to right) add $v_i$ right steps $(m,0)$ followed
by one up step $(0,1)$. This clearly defines a bijection between
$m$-vectors of size $n$ and such paths of height $m n$.

As an example, the path associated to the $2$-vector
$(1,0,1,2,0,0,1,1)$ is

\vskip 0.2cm
\centerline{\includegraphics{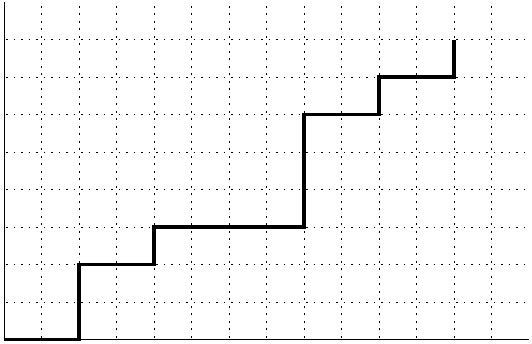}}
\vskip 0.2cm

If the path $\pi(v)$ associated with an $m$-vector $v$ always remains
above the diagonal $x=y$, we call this path an {\em $m$-Dyck path}, and say
that the corresponding $m$-vector $v$ is an {\em $m$-Dyck vector}.

Being an $m$-Dyck vector is equivalent to the condition that, for any
$1\le \ell\le m n$, one has
\begin{equation*}
  m (v_1+v_2+\cdots+v_{\ell}) < \ell.
\end{equation*}

For $v$ an $m$-vector (of length $m n$), we denote by $\A_v$ the monomial 
\begin{equation*}
  \A_v=(x_1^{v_1}y_1^{v_2}\cdots \omega_1^{v_m})(x_2^{v_{m+1}}y_2^{v_{m+2}}\cdots\omega_2^{v_{2m}})\cdots (x_n^{v_{m(n-1)+1}}\cdots\omega_n^{v_{m n}}).
\end{equation*}
This clearly defines a bijection between $m$-vectors and all monomials in the polynomial ring $\QQ[X_n,Y_n,\dots,\Omega_n]$.

For example, the monomial associated to the $2$-vector $(1,0,1,2,0,0,1,1)$ is $x_1 x_2 y_2^2 x_4 y_4$.

The following result was proved in \cite[Th. 5.1]{aval06}.

\begin{prop}
  The set $\B_{m,n}$ of monomials $\A_v$ for $v$ 
  varying over {\em $m$-Dyck vectors} of size $n$ is a basis for the space $\R_{m,n}=\QQ[X_n,Y_n,\dots,\Omega_n]/\J_{m,n}$.
\end{prop}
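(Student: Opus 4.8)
The plan is to prove this by a Gröbner basis computation, in two halves: a spanning statement coming from the leading terms of the $F_c$, and a matching lower bound on the dimension. First I would fix a term order $\prec$ on the monomials of $\QQ[X_n,Y_n,\dots,\Omega_n]$ refining the total degree, for which small-index variables are large --- say $x_1 \succ \cdots \succ x_n \succ y_1 \succ \cdots \succ \omega_n$ with the induced degree-lexicographic order. Under the bijection $v \mapsto \A_v$ between $m$-vectors and monomials, the first task is to identify the leading monomial of each fundamental $G^m$-quasisymmetric polynomial $F_c$: among the admissible maps $i$ occurring in $F_c = \sum_i \prod_{t \in w_c} t_{i(t)}$, the $\prec$-dominant one should be the one assigning to every letter of the $k$-th block of $c$ the index $k$, giving $\mathrm{LT}(F_c) = \A_{\widehat c}$, where $\widehat c$ denotes $c$ padded with zeros on the right to length $mn$. (Any of the term orders used in \cite{aval06} would work just as well; only the qualitative shape of the leading terms matters in what follows.)

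The next step is to translate divisibility by these leading terms into the $m$-Dyck condition. I would show that a monomial $\A_v$ is divisible by $\mathrm{LT}(F_c) = \A_{\widehat c}$ for some $m$-composition $c$ of size $\le n$ if and only if $v$ is \emph{not} an $m$-Dyck vector. One direction is immediate: since $c$ has no block of $m$ consecutive zeros, the sum of the first $ms$ entries of $\widehat c$ is at least $s$, so $\widehat c$ already violates $m(v_1 + \cdots + v_\ell) < \ell$ at $\ell = ms$, and so does any $\A_v$ that it divides. For the converse, starting from an $\ell$ with $m(v_1 + \cdots + v_\ell) \ge \ell$, one extracts from the prefix $(v_1,\dots,v_\ell)$ an $m$-composition $c$ of size $\le n$ with $c_j \le v_j$ for all $j$ --- trimming trailing all-zero blocks and, if necessary, shrinking some entries; this bit of bookkeeping is the combinatorial core of the step. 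Consequently the standard monomials of the ideal generated by the $\mathrm{LT}(F_c)$ are exactly the elements of $\B_{m,n}$, and the usual Noetherian reduction argument shows that their images span $\R_{m,n}$. Hence $\B_{m,n}$ spans and $\dim \R_{m,n} \le \#\{m\text{-Dyck vectors of size } n\} = \frac{1}{mn+1}\binom{(m+1)n}{n}$, the latter being a Fuss--Catalan count, provable by the cycle lemma.

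The remaining and genuinely hard part is the reverse inequality $\dim \R_{m,n} \ge \frac{1}{mn+1}\binom{(m+1)n}{n}$, which is equivalent to showing that $\{F_c : c \text{ an } m\text{-composition of size} \le n\}$ is actually a Gröbner basis of $\J_{m,n}$. The route I would take is to verify Buchberger's criterion: for every overlapping pair $\mathrm{LT}(F_c), \mathrm{LT}(F_{c'})$ of leading terms, the associated $S$-polynomial must reduce to $0$ modulo the family $\{F_{c''}\}$, and the natural way to witness this is to write, for each overlap, an explicit linear combination of fundamental $G^m$-quasisymmetric polynomials matching the $S$-polynomial, using the product/shuffle-type relations among the $F_c$. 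Organizing these overlaps and their reductions uniformly in $m$ is where the real work lies, and this is the step I expect to be the main obstacle. (An alternative to the full $S$-polynomial analysis would be to produce a nondegenerate pairing between $\R_{m,n}$ and a space of dimension $\frac{1}{mn+1}\binom{(m+1)n}{n}$ spanned by $m$-Dyck paths, but some input of this kind, beyond the leading-term analysis, is unavoidable.) Together with the spanning statement above, this gives that $\B_{m,n}$ is a basis of $\R_{m,n}$.
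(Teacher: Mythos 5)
The paper itself does not prove this proposition: it is imported verbatim from \cite{aval06} (Th.~5.1), so the only thing to compare your sketch with is the strategy of that reference, which is indeed Gr\"obner-theoretic. Your version, however, hinges on a claim that is false: that a monomial $\A_v$ fails the $m$-Dyck condition if and only if it is divisible by $\mathrm{LT}(F_c)$ for some $m$-composition $c$ of size $\le n$. Take $m=1$, $n=2$, with your order $x_1\succ x_2$. The fundamental quasisymmetric polynomials are $F_{(a)}=x_1^a+x_2^a$ and $F_{(a,b)}=x_1^ax_2^b$, with leading terms $x_1^a$ and $x_1^ax_2^b$, so the monomial ideal generated by all $\mathrm{LT}(F_c)$ is just $\langle x_1\rangle$ and its standard monomials are \emph{all} powers of $x_2$ --- infinitely many --- whereas $\B_{1,2}=\{1,x_2\}$ and $\dim\R_{1,2}=2$. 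Concretely, $x_2^2=F_{(2)}-x_1F_{(1)}+F_{(1,1)}$ lies in $\J_{1,2}$, yet $x_2^2$ is divisible by no $\mathrm{LT}(F_c)$; equivalently, for $v=(0,2)$ your ``extraction of a composition $c$ with $c_j\le v_j$ from a prefix violating the Dyck condition'' is impossible, since every entry of a $1$-composition is positive while $v_1=0$. So the $F_c$ are not a Gr\"obner basis of $\J_{m,n}$, the identification of standard monomials with $\B_{m,n}$ collapses, and your spanning argument does not even yield finite-dimensionality. In \cite{abb2004,aval06} the Gr\"obner basis consists of \emph{other} elements of the ideal, built precisely so that their leading terms are the minimal non-Dyck monomials; showing that these elements actually belong to $\J_{m,n}$ is the main content of the proof there.

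Beyond this, the half you yourself single out as ``the genuinely hard part'' --- the lower bound $\dim\R_{m,n}\ge\frac{1}{mn+1}\binom{(m+1)n}{n}$, i.e.\ linear independence of the Dyck monomials modulo $\J_{m,n}$ --- is only announced (via a Buchberger-criterion verification you do not attempt), not carried out. So even setting aside the counterexample, the proposal is a programme rather than a proof; the correct course here is simply to cite \cite{aval06}, as the paper does.
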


\section{A bijection between $M$-angulations and $m$-Dyck paths}

We assign to each vertex of the $m n+2$-gon (except the vertex $0$) a
letter in $\M$ as follows. The vertices are labelled by repeating the
sequence $x,y,z,\dots,\omega$ in counter-clockwise order around the
polygon, in such a way that the final vertex just before the
vertex $0$ receives the last letter $\omega$ of $\M$.  It follows that
the first vertex just after the vertex $0$ also receives the letter
$\omega$.

See Figure \ref{fig:diag} for an illustration of this labelling when
$m=2$ with the ordered set of letters $x < y$.
\begin{figure}[ht!] 
  \centering \includegraphics[scale=0.5]{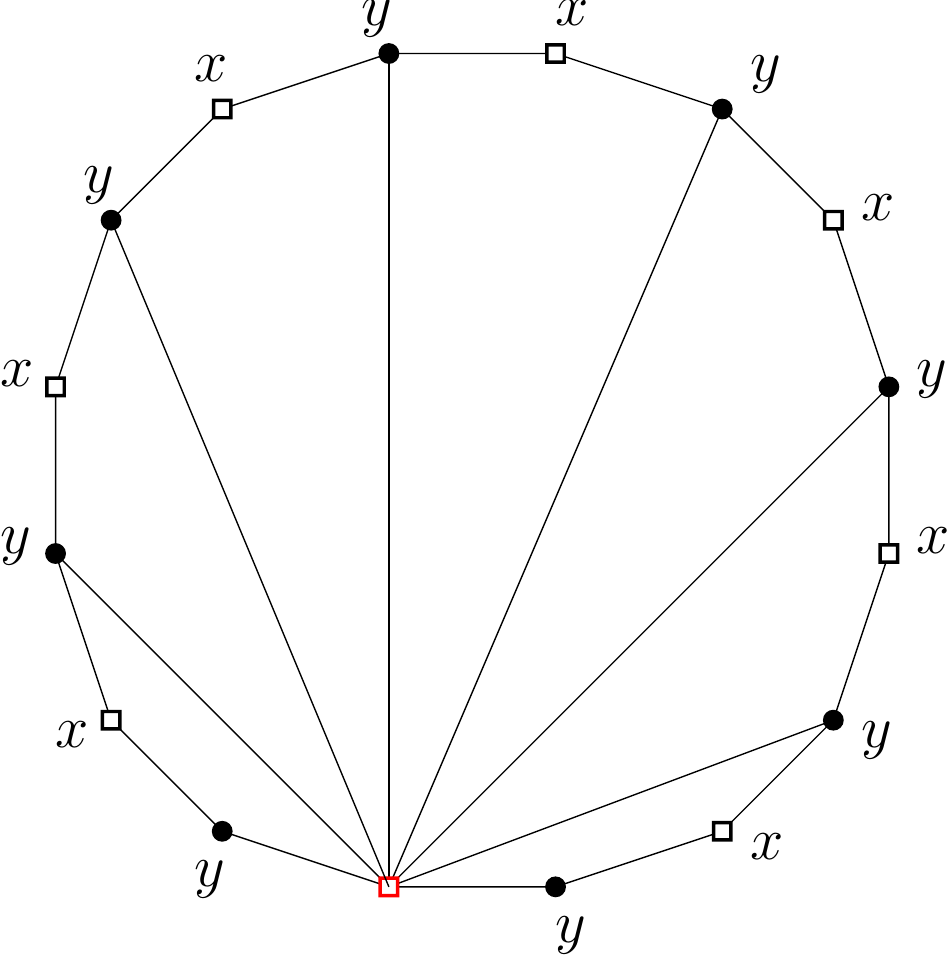} 
  \caption{The labelling of vertices by $x$ and $y$ letters ($m=2$)}
  \label{fig:diag}
\end{figure}

Let us also label the inner diagonals of $Q_0$ from $1$ to $n-1$ in
counter-clockwise order.

\medskip

Then we consider an $M$-angulation $Q$.  To any diagonal $d$ of $Q$,
we associate a polynomial $m_d$.  If $d$ coincides with a diagonal of
$Q_0$, we set $m_d=1$.  Otherwise, and reading counter-clockwise, $d$
starts from a vertex labelled by the letter $u$, intersects
consecutive diagonals of $Q_0$ labelled from $i$ to $j$ and ends at a
vertex labelled by the letter $v$.  Then we set $m_d=v_{j+1}-u_{i}$. We then
associate to $Q$ the polynomial $P_Q$ defined as the product of $m_d$
over its diagonals.

As an example, Figure \ref{fig:P3-P} shows the polynomials associated 
to the quadrangulations of Figure \ref{fig:P3}, in the corresponding positions.
\begin{figure}[ht!] 
  \centering \includegraphics[scale=1]{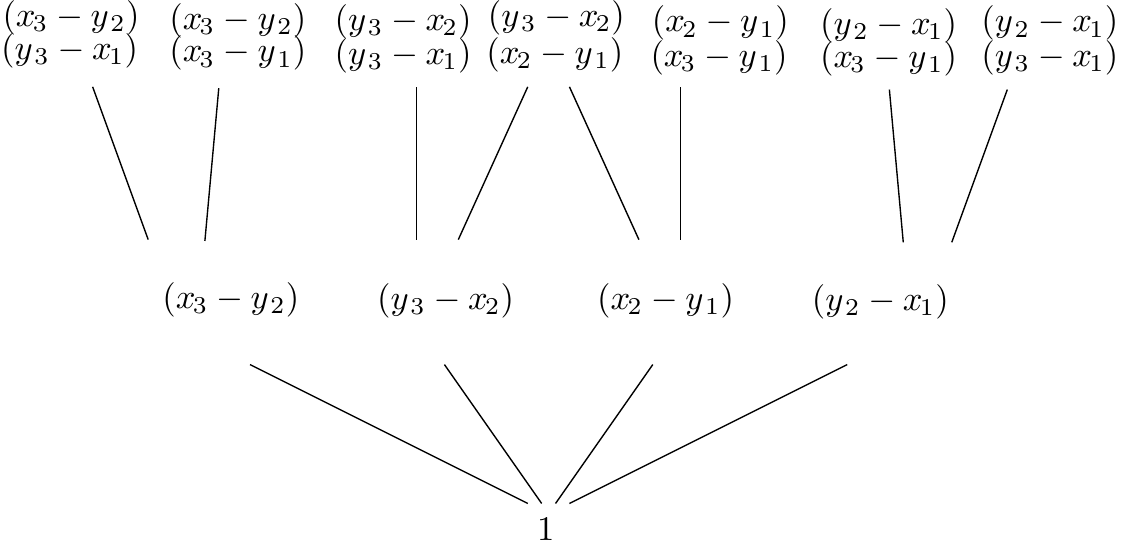} 
  \caption{The polynomials associated to quadrangulations of Figure \ref{fig:P3}}
  \label{fig:P3-P}
\end{figure}

\medskip

To deal with leading terms of polynomials, we will use the
lexicographic order induced by the ordering of the variables:
\begin{equation}
  x_1<y_1<\dots<\omega_1<x_2<y_2<\cdots<x_n<y_n<\dots<\omega_n.  
\end{equation}
The lexicographic order is defined on monomials as follows:
$\A_v<_{\rm lex}\A_w$ if and only if the last non-zero entry of
$v-w$ (componentwise) is negative.

Note that the leading monomial of the polynomial $P_Q$ attached to an
$M$-angulation is easily described: in every binomial factor
written $v_{j+1}-u_i$ with $j+1 > i$, keep only the monomial $v_{j+1}$.

\begin{prop}
  The set of leading monomials of the polynomials $P_Q$ when $Q$
  varies over the set of $M$-angulations coincides with the monomial
  basis $\B_{m,n}$.
\end{prop}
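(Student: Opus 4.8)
The plan is to prove that $Q\mapsto \mathrm{LM}(P_Q)$ is a bijection from $\Q_{m,n}$ onto $\B_{m,n}$. Since both $\Q_{m,n}$ and the set of $m$-Dyck vectors of size $n$ have cardinality $\frac{1}{mn+1}\binom{(m+1)n}{n}$ (the latter because it is $\dim\R_{m,n}$), it suffices by counting to check that (a) every leading monomial $\mathrm{LM}(P_Q)$ belongs to $\B_{m,n}$, and (b) the assignment $Q\mapsto \mathrm{LM}(P_Q)$ is injective.

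The first step is to make $\mathrm{LM}(P_Q)$ explicit. A region of $Q_0$ is an $(m+2)$-gon, which admits no proper subdivision into $(m+2)$-gons; hence no diagonal of an $M$-angulation lies strictly inside a region of $Q_0$, so every diagonal $d$ of $Q$ not in $Q_0$ crosses at least one diagonal of $Q_0$. In particular the binomial $m_d=v_{j+1}-u_i$ genuinely has $j+1>i$ and its leading monomial is $v_{j+1}$, by the remark preceding the statement. Writing $j=j(d)$ for the largest label of a diagonal of $Q_0$ crossed by $d$, and $t=t(d)$ for its counter-clockwise endpoint, one checks that $t$ lies strictly between the vertices $jm+1$ and $(j+1)m+1$ joined to the apex by consecutive edges of the fan $Q_0$; by the labelling rule the letter of $t$ is then the $(t-jm-1)$-th one, so the variable $v_{j+1}$ occupies coordinate $mj+(t-jm-1)=t-1$ of the underlying $mn$-vector. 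Therefore $\mathrm{LM}(P_Q)=\A_{v(Q)}$, where $v(Q)_\ell$ is the number of diagonals of $Q$ not incident to the apex whose larger endpoint is vertex $\ell+1$; in particular $v(Q)_1+\cdots+v(Q)_\ell=\delta_Q(\ell)$, the number of diagonals of $Q$ with both endpoints among the first $\ell+1$ vertices.

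For (a) it remains to prove the $m$-Dyck inequality $m\,\delta_Q(\ell)<\ell$ for $1\le\ell\le mn$. Let $P_\ell$ be the $(\ell+1)$-gon on the vertices $1,\dots,\ell+1$, closed by the chord $c=(1,\ell+1)$. The $\delta_Q(\ell)$ diagonals of $Q$ inside $P_\ell$ cut it into faces, each being either a full $(m+2)$-gon region of $Q$ (with all vertices $\le\ell+1$) or a region of $Q$ truncated by $c$. An Euler-characteristic count of edge--face incidences, after including inside $P_\ell$ the stubs of the $Q$-diagonals that cross $c$, yields $m\,\delta_Q(\ell)=\ell-s$ with $s\ge 1$: when $c\in Q$, $c$ is a boundary edge met by a single face and $s=1$; otherwise $s$ is the number of honest sides (arc-edges or $Q$-diagonals) of the truncated faces, and $s\ge1$ because the region of $Q$ touching $c$ just after the vertex $1$ has, at that vertex, a side leading to some vertex in $\{2,\dots,\ell\}$ (not $c$, which need not be an edge of $Q$). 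Hence $m\,\delta_Q(\ell)\le\ell-1<\ell$, so $v(Q)$ is an $m$-Dyck vector.

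For (b) I would recover $Q$ from $v(Q)$. Scan the vertices $2,3,\dots,mn+1$, keeping a ``remaining region'' $\Pi_t$ (the whole polygon at the start). At the vertex $t$, the number $v(Q)_{t-1}$ is exactly the number of diagonals of $Q$ joining $t$ to a smaller-labelled vertex, and each such diagonal closes off --- on the side away from the apex --- a region of $Q$ whose other sides all appeared at earlier steps (they join vertices $<t$). These diagonals are thus forced, read from the one nearest the edge $(t-1,t)$ outward: the first joins $t$ to the $(m+1)$-th boundary vertex of $\Pi_t$ met going clockwise from $t$, the next to the $(m+1)$-th vertex beyond that, and so on, $\Pi_t$ losing one $(m+2)$-gon at each step. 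Once all vertices are processed, the remaining region is filled by the apex diagonals of $Q$, and a polygon has a unique subdivision into $(m+2)$-gons by diagonals through a fixed vertex. So $Q$ is uniquely determined by $v(Q)$; with (a) and the cardinality count, this proves the proposition.

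The $m$-Dyck inequality in (a) is the step I expect to be the real work: non-crossingness alone bounds $\delta_Q(\ell)$ only by $\ell-2$, far too weak, so one truly needs the rigidity of the $(m+2)$-gon tiling, and the delicate point is the correct bookkeeping for the faces cut by the chord $c$ in the Euler computation.
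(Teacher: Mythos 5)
Your proposal is correct in substance, but it runs in the opposite direction from the paper's proof. The paper constructs the inverse map $\Psi$ from $m$-Dyck vectors to $M$-angulations by successively adding fans whose apices are the vertices $\ell+1$ with $v_\ell\neq 0$; there the $m$-Dyck inequality is an \emph{input}, used only to check that each fan's starting vertices lie strictly between the apex $0$ and its ending vertex, the identity $\mathrm{LM}(P_{\Psi(v)})=\A_v$ is then read off, injectivity is immediate (one recovers $v$ from $\A_v$), and the Fuss--Catalan count finishes. You instead analyse $Q\mapsto \mathrm{LM}(P_Q)$ directly: your description $\mathrm{LM}(P_Q)=\A_{v(Q)}$, with $v(Q)_\ell$ the number of diagonals of $Q$ not through the apex whose counter-clockwise endpoint is $\ell+1$, agrees with the paper's fan construction, but then you must \emph{prove} the $m$-Dyck inequality (your step (a), via a face/side count in the sub-polygon on vertices $1,\dots,\ell+1$) and prove injectivity by a forced left-to-right reconstruction (your step (b)), before invoking the same cardinality argument. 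What each route buys: the paper's direction avoids the counting argument entirely, at the cost of only sketching the inverse; yours carries the combinatorial weight in (a), but in exchange gives an intrinsic characterization of which monomials occur and an explicit algorithm for the inverse map $\Phi$ that the paper only describes informally after its proof.

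Two caveats, neither fatal. First, the identity $m\,\delta_Q(\ell)=\ell-s$ with $s\ge 1$ is asserted rather than proved; it is true, but closing it needs two geometric facts you leave implicit: each diagonal $d$ counted by $\delta_Q(\ell)$ has the region of $Q$ on its apex-free side entirely inside the sub-polygon, and $d\mapsto$ that region is a bijection onto the regions of $Q$ contained in the sub-polygon (equivalently, the number of truncated faces is one more than the number of stubs, because $c\cap R$ is connected for each convex region $R$). With these, counting sides of the $\delta_Q(\ell)$ interior regions gives exactly your identity, and your vertex-$1$ argument for $s\ge1$ when $c\notin Q$ is sound. Second, two small slips: the counter-clockwise endpoint $t$ need not lie \emph{strictly} between $jm+1$ and $(j+1)m+1$ (it can equal $(j+1)m+1$, with letter $\omega$; the formula ``coordinate $t-1$'' still holds), and in (b) the parenthetical ``they join vertices $<t$'' is inaccurate for the side of the cut-off region incident to $t$ — your innermost-to-outermost sweep starting at the edge $(t-1,t)$ already repairs this, so the reconstruction, hence injectivity, stands.
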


\begin{proof} 
  We need a bijection $\Phi$ between $M$-angulations $Q$ and
  $m$-Dyck paths (or rather $m$-Dyck vectors $v$) such that the leading
  monomial of $P_Q$ is equal to $\A_{\Phi(Q)}$.

  The idea to define $\Phi$ is to compose the leading-monomial application
  $Q\mapsto LM(P_Q)$ with the bijection between monomials and $m$-Dyck
  paths described in section \ref{dyck_paths}.

  Let us instead define the reverse bijection $\Psi$.

  Let $v$ be an $m$-Dyck vector of size $n$. We start from the empty set of
  diagonals on the $m n+2$-gon. We shall add iteratively
  diagonals. Let $D$ denote the current set of diagonals, under
  construction. We read the $m$-vector $v$ from left to right. To any
  non-zero entry $c$ associated to variable $z_k$ we add to $D$ a fan
  with $c$ diagonals as follows. Let $t$ be the letter before $z$ in
  the cyclic order $x < y < \dots < \omega < x$. The (common) ending
  point of the added fan is the first vertex labelled $z$ that comes
  counter-clockwise after the diagonal $k - 1$ of $Q_0$, and the
  starting points are the $c$ last vertices labelled $t$ that are
  available before the ending point. Here being available means not
  being separated from the ending point by a diagonal already in $D$.


  See Figure \ref{fig:bij-ex} for an example of this construction.
  \begin{figure}[ht!] 
    \centering \includegraphics[scale=0.8]{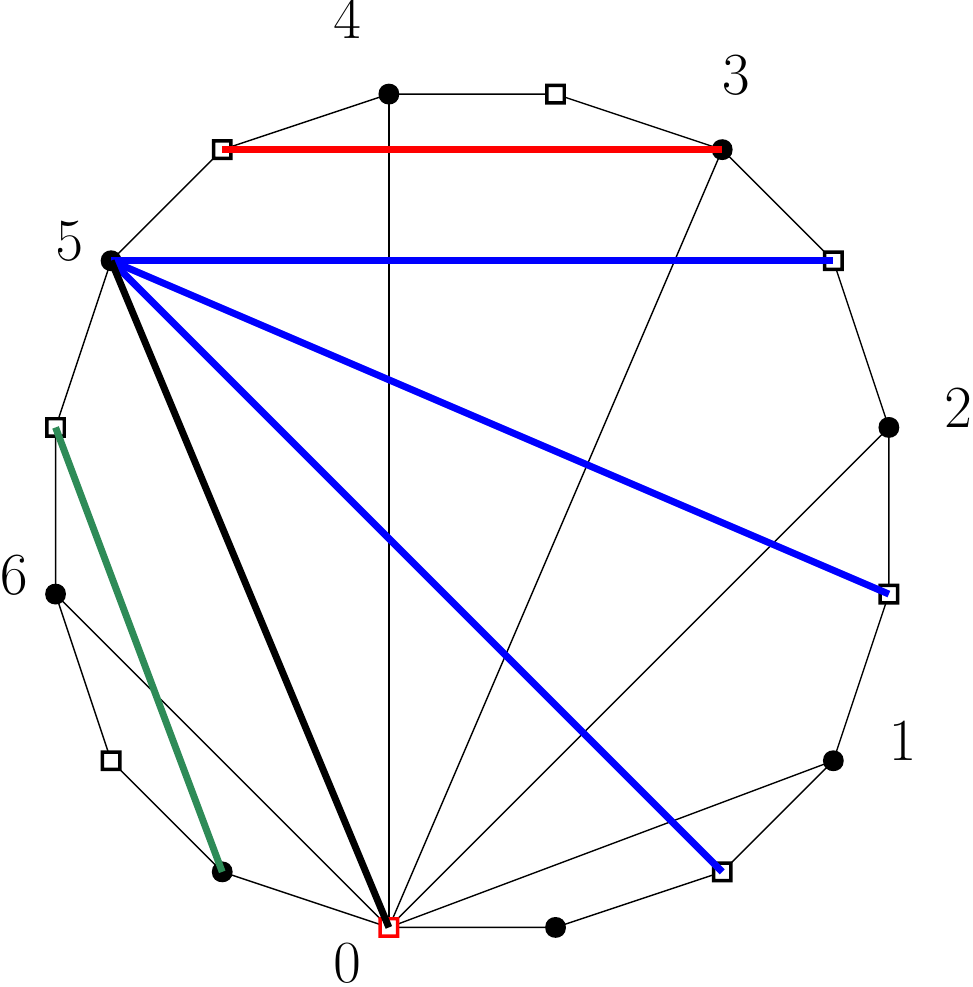} 
    \caption{The quadrangulation $Q$ associated to the monomial 
      $
      \textcolor{red}{x_5}
      \,
      \textcolor{blue}{y_5^3}
      \,
      \textcolor{green2}{y_7}
      $}
 with $P_Q = (x_5-y_4)(y_5-x_3)(y_5-x_2)(y_5-x_1)(y_7-x_6)$.
    \label{fig:bij-ex}
  \end{figure}

  Now the key point is that, when adding such a fan, the starting
  points are always strictly between the apex $0$ and the ending
  point in counter-clockwise order. This is because of the $m$-Dyck
  word property of $v$: for any $1\le \ell\le m n$, one has
  \begin{equation*}
    m (v_1+v_2+\cdots+v_{\ell}) < \ell.
  \end{equation*}
  Indeed, when numbering the vertices of the polygon counter-clockwise
  from the apex $0$ to $m n+1$, the index of the first vertex $t$ in
  counter-clockwise order for the fan added at step $\ell$ is exactly
  $(1+\ell)-m(v_1+v_2+\cdots+v_\ell)-1$.

  The leading term of the product of binomials $m_d$ attached to the
  diagonals in the fan just added is therefore the monomial $z_k^c$,
  by definition of the lexicographic order.

  At the end of the process, there remains only to add some diagonals
  of $Q_0$ to obtain an $M$-angulation. There is a unique way to do
  that. This does not change the product of $m_d$ over all diagonals.

  One has therefore defined a map $\Psi$ from $m$-Dyck words to
  $M$-angulations. This has the property that the leading monomial of
  $P_{\Psi(v)}$ is equal to $\A_v$. Because one can recover the
  $m$-Dyck word $v$ from $\A_v$, this map is injective.

  This proves that $\Psi$ is an injection between sets of equal
  cardinalities, hence a bijection.
\end{proof}

The reverse bijection $\Phi$ can be described as first removing from
$Q$ all diagonals in $Q_0$, then proceeding by successive removals of
fans.

\begin{prop}
  The set of polynomials $P_Q$ (when $Q$
  varies over the set of $M$-angulations) endowed with the poset structure 
  given by divisibility is isomorphic to $\P_{m,n}$.
\end{prop}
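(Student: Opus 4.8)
The plan is to prove that, for any two $M$-angulations $Q$ and $Q'$, the three conditions
\[
Q'\le Q\ \text{in}\ \P_{m,n},\qquad S(Q')\subseteq S(Q),\qquad P_{Q'}\mid P_Q
\]
are equivalent, where $S(Q)$ denotes the set of diagonals of $Q$ that do not belong to $Q_0$. Since the previous proposition shows that $Q\mapsto LM(P_Q)$ is a bijection, the map $Q\mapsto P_Q$ is injective, so it identifies $\Q_{m,n}$ with the set $\{P_Q\}$; and a bijection which is both order-preserving and order-reflecting is an isomorphism of posets, so the equivalence of the first and third conditions is exactly the asserted statement. The implication $Q'\le Q\Rightarrow S(Q')\subseteq S(Q)$ is immediate from the description of the covers: if $Q'\triangleleft Q$ then $Q$ is obtained from $Q'$ by flipping one diagonal $d^{*}\in Q_0$ into a diagonal $d\notin Q_0$, all other diagonals being common to $Q$ and $Q'$, so $S(Q)=S(Q')\sqcup\{d\}$, and one concludes by transitivity. (Since moreover $m_{d^{*}}=1$ and $m_e$ depends only on $e$ and on $Q_0$, this also gives $P_Q=m_d\cdot P_{Q'}$, re-proving in passing that $\P_{m,n}$ is graded with rank $|S(\cdot)|=\deg P$.)

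The reduction $S(Q')\subseteq S(Q)\iff P_{Q'}\mid P_Q$ is then pure commutative algebra. The polynomial ring $\QQ[X_n,Y_n,\dots,\Omega_n]$ is a UFD, and by construction $P_Q=\prod_{d\in S(Q)}m_d$, where each factor $m_d=v_{j+1}-u_i$ has degree one and involves two distinct variables (recall $j+1>i$), hence is irreducible. The key point is that $d$ can be recovered from $m_d$: the letter $v$ together with the subscript $j+1$ pins down the endpoint of $d$, and the letter $u$ together with the subscript $i$ pins down its starting vertex. Thus distinct diagonals contribute distinct, pairwise non-associate irreducible factors, and unique factorization yields $P_{Q'}\mid P_Q$ if and only if $S(Q')\subseteq S(Q)$.

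The remaining implication $S(Q')\subseteq S(Q)\Rightarrow Q'\le Q$ is the heart of the argument. I would cut the ambient polygon along all diagonals common to $Q$ and $Q'$. Since the diagonals of $S(Q')$ lie in both $Q$ and $Q'$, every diagonal of $Q'$ outside $Q_0$ is used as a cut, so on each resulting piece $\sigma$ the restriction $Q'|_\sigma$ is an $M$-angulation with no diagonal outside $Q_0$: it is the fan of $\sigma$ when $\sigma$ has the apex $0$ on its boundary, and the diagonal-free $M$-angulation of an $(m+2)$-gon otherwise, in which case $Q|_\sigma=Q'|_\sigma$ as well. On each piece through the apex, the fan is the unique minimum of the corresponding poset $\P(\sigma)$ — this is precisely what Lemma~\ref{descente_possible} is used to establish, now applied to $\sigma$ — so one can climb from $Q'|_\sigma$ up to $Q|_\sigma$ along a saturated chain of $\P(\sigma)$. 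Each step of such a chain flips a diagonal of the fan of $\sigma$, that is, a diagonal of $Q_0$, so it lifts to a cover of $\P_{m,n}$; carrying out these climbs independently on the various pieces — they involve pairwise disjoint sets of interior diagonals, and at each stage the union of the cut diagonals with the current $M$-angulation of each piece is a genuine $M$-angulation of the whole polygon — and concatenating them exhibits a saturated chain from $Q'$ to $Q$ in $\P_{m,n}$, hence $Q'\le Q$.

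I expect the last paragraph to be the main obstacle, and within it the genuinely delicate ingredient is the interaction between the labelling of vertices by residues modulo $m$ and the cutting operation: one must verify that the labelling a piece $\sigma$ inherits from the ambient polygon is the one for which $0$ plays the role of apex, so that the fan of $\sigma$ is indeed $Q_0$ restricted to $\sigma$ and the flips performed inside $\sigma$ are legitimate moves of $\P_{m,n}$. This is exactly the residue-class bookkeeping (around the region $R'$) that already appears in the proof of Lemma~\ref{descente_possible}, and it is what keeps this direction from being a formality.
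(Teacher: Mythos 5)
Your proof is correct, and its algebraic core is the same as the paper's: $P_Q$ factors into the pairwise non-associate linear binomials $m_d$ attached to the diagonals $d\notin Q_0$, each $d$ is recoverable from $m_d$, so $Q\mapsto P_Q$ is injective, and a cover (flip of a $Q_0$-diagonal) amounts to multiplication by exactly one such binomial. The difference is one of completeness: the paper's proof consists of precisely those two observations, which give injectivity and the implication $Q'\le Q\Rightarrow P_{Q'}\mid P_Q$, and leaves the order-reflecting direction implicit. You make it explicit, first reducing divisibility to the inclusion $S(Q')\subseteq S(Q)$ by unique factorization, and then proving $S(Q')\subseteq S(Q)\Rightarrow Q'\le Q$ by cutting along the diagonals common to $Q$ and $Q'$ and climbing, inside each piece containing the apex, from the fan of that piece up to the restriction of $Q$, invoking the minimality of the fan (Lemma \ref{descente_possible} together with the induction following it) applied to the piece. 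This is the genuinely non-formal step, it is exactly what the paper does not spell out, and your argument for it is sound. One remark on the point you single out as delicate: it is in fact immediate from your own setup, with no residue bookkeeping required. The fan of a piece $\sigma$ is precisely $Q'$ restricted to $\sigma$, whose diagonals lie in $Q_0$ by the very choice of the cuts; moreover an upward flip never produces a diagonal through the apex (each vertex of the $(2m+2)$-gon is an endpoint of exactly one splitting diagonal, so the $m$ alternatives to a $Q_0$-diagonal avoid the vertex $0$), hence every diagonal flipped during the climb is one of the original diagonals of $Q'\cap Q_0$ and each step is a legitimate cover of $\P_{m,n}$.
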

\begin{proof}
  The map $Q \mapsto P_Q$ is a bijection, with inverse obtained from
  the factorisation of the polynomial. The flip in $Q$ of a diagonal
  of $Q_0$ corresponds to the multiplication of $P_Q$ by a
  linear binomial.
\end{proof}

Remark: the fan $Q_0$ has an obvious involutive symmetry that fixes
the apex $0$ and flips the ambient polygon. Given the labelling of the
polygon by $\M = \{x,y,\dots,\omega\}$, this involution acts on
variables by simultaneous substitution of letters
\begin{equation*}
  \omega, x, y, z, \ldots \longleftrightarrow \omega, \dots, z, y, x.
\end{equation*}
and renumbering of indices $i \leftrightarrow n+1-i$. By construction,
the set of polynomials $P_Q$ is sent to itself (up to signs) by this
involution acting on variables.

\section{Enumerative aspects}

\subsection{Recursive description}

Let $T_m = \sum_{n \geq 1} \# \Q_{m,n} x^{n}$ be the generating series
for $M$-angulations according to their size $n$. Note that the power
of $x$ is chosen to correspond to the number of regions.

There is a classical recursive decomposition of $M$-angulations,
which describes them as an $M$-angle with an $M$-angulation (or nothing) grafted
on all but one sides. This implies that
\begin{equation}
  \label{eq_T}
  T_m = x (1 + T_m)^{m+1}.
\end{equation}

Recall that the final $M$-angulations are those which
do not contain any diagonal of $Q_0$. They are the maximal elements of the
posets $\P_{m,n}$.

Let $F_m$ be the generating series for final $M$-angulations according
to the number of regions. These objects can be decomposed as one
$M$-angle (the unique region having $0$ in its boundary), on which one
can graft any $M$-angulation on all sides that do not contain the
vertex $0$. One obtains that
\begin{equation}
  \label{eq_F}
  F_m = x (1 + T_m)^m = T_m / (1+T_m).
\end{equation}

Let us now describe another simple decomposition of $M$-angulations,
corresponding to the rightmost expression in the equation \eqref{eq_F}.

To every $M$-angulation $Q$, one can associate a list $\li(Q)$ of
final $M$-angulations, obtained by cutting $Q$ along its initial
diagonals. Let us assume that these pieces are listed in
counter-clockwise order.

\begin{prop}
  Sending $Q$ to $\li(Q)$ defines a bijection from $M$-angulations
  to lists of final $M$-angulations.
\end{prop}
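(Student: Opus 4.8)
The plan is to construct the inverse map explicitly and verify that both composites are the identity. Given a list $(Q^{(1)}, \dots, Q^{(k)})$ of final $M$-angulations, listed in counter-clockwise order, where $Q^{(i)}$ has $n_i$ regions, I would glue them together cyclically around the apex $0$: place the final pieces consecutively, identifying the $\omega$-$0$ edge of one piece with the $0$-$x$ edge of the next, and inserting an initial diagonal of $Q_0$ between consecutive pieces. This produces an $M$-angulation $Q$ of the $(m(n_1+\cdots+n_k)+2)$-gon whose set of initial diagonals separates it into exactly the pieces $Q^{(i)}$. The first thing to check is that this gluing is well-defined, i.e.\ that the cyclic labelling of vertices by $\M$ is consistent: each final piece on $n_i$ regions has $mn_i+2$ boundary vertices, the two ends being the apex-copies, so the labels in between run through $m n_i$ letters and the next apex-copy again lands just after an $\omega$; this is exactly the compatibility condition built into the vertex labelling convention from Section 4.

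Next I would argue that $\li$ and this gluing map are mutually inverse. Starting from $Q$, cutting along initial diagonals yields pieces each of which contains the apex $0$ (since an initial diagonal is incident to $0$, and every region of $Q$ touching $0$ is bounded by initial diagonals on the two sides through $0$) — hence each piece is a final $M$-angulation in its own polygon, once we relabel, and regluing them recovers $Q$ because the initial diagonals of $Q$ are precisely the cuts we made. Conversely, in the glued object the diagonals we inserted are the only initial diagonals: any diagonal internal to a piece $Q^{(i)}$ is not incident to $0$ (that piece was final), and no diagonal can cross the apex region structure, so cutting along initial diagonals returns the original list. This bidirectional argument is mostly bookkeeping about which diagonals are incident to the apex.

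The only genuinely delicate point, and the step I expect to be the main obstacle, is verifying that each piece $Q^{(i)}$ obtained by cutting really is \emph{final}, i.e.\ contains no diagonal of $Q_0$ in its own right after relabelling. This amounts to showing that $Q$ has no diagonal that is incident to $0$ but fails to be one of the ``global'' initial diagonals separating the pieces — equivalently, that the region of $Q$ containing $0$ is a single $(m+2)$-gon, so that the diagonals through $0$ are exactly the $k-1$ cuts. In fact this is forced: the fan $Q_0$ has $n-1$ diagonals all through $0$, but $Q$ may share only some of them with $Q_0$; the diagonals of $Q$ through $0$ are necessarily a subset of the diagonals of $Q_0$ through $0$ (diagonals through the apex of a fan of a fixed polygon are exactly the $Q_0$-diagonals), and these are precisely the initial diagonals of $Q$. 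So the region of $Q$ at $0$ is cut off by consecutive $Q_0$-diagonals, forcing it to be a single $(m+2)$-gon, and each adjacent piece is final. Once this observation is in place, the rest is a routine check that sizes add up ($n = \sum n_i$), matching the generating-function identity $T_m = F_m(1+T_m) + F_m \cdot 0$-style expansion $T_m/(1+T_m) = F_m$ unrolled as $T_m = F_m + F_m T_m + F_m T_m^2 + \cdots$, which corresponds exactly to listing a nonempty sequence of final pieces.
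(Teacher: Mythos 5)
Your proposal is correct and follows essentially the same route as the paper, which also just exhibits the inverse map by gluing the final pieces back together around the apex; your extra care about why the cut pieces are final is the right point to isolate, and it reduces (as you say) to the fact that any diagonal of an $M$-angulation incident to the apex $0$ must join $0$ to a vertex $\equiv 1 \pmod m$ (both sides of such a diagonal must have $\equiv 2 \pmod m$ vertices), hence is a diagonal of $Q_0$. Only your phrasing about ``the region of $Q$ at $0$ being a single $(m+2)$-gon'' is garbled (there is one such region per piece, not one globally), but the underlying argument is sound.
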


The inverse bijection is very simple: given a list of final
$M$-angulations, one can glue them back along their sides into one
single $M$-angulation.

This inverse bijection from $\li(Q)$ to $Q$ can also be interpreted in
the following way, that will be useful later. Given a list of $k$
final $M$-angulations, one considers the $M$-angulation $Q_0$ with $k$
regions. One then replaces the regions of $Q_0$ by the final
$M$-angulations, in the counter-clockwise order. In the resulting
$M$-angulation $Q$, the initial $Q_0$ can be identified with the union
of all regions that are adjacent to the vertex $0$.

  \begin{figure}[ht!] 
    \centering \includegraphics[scale=0.6]{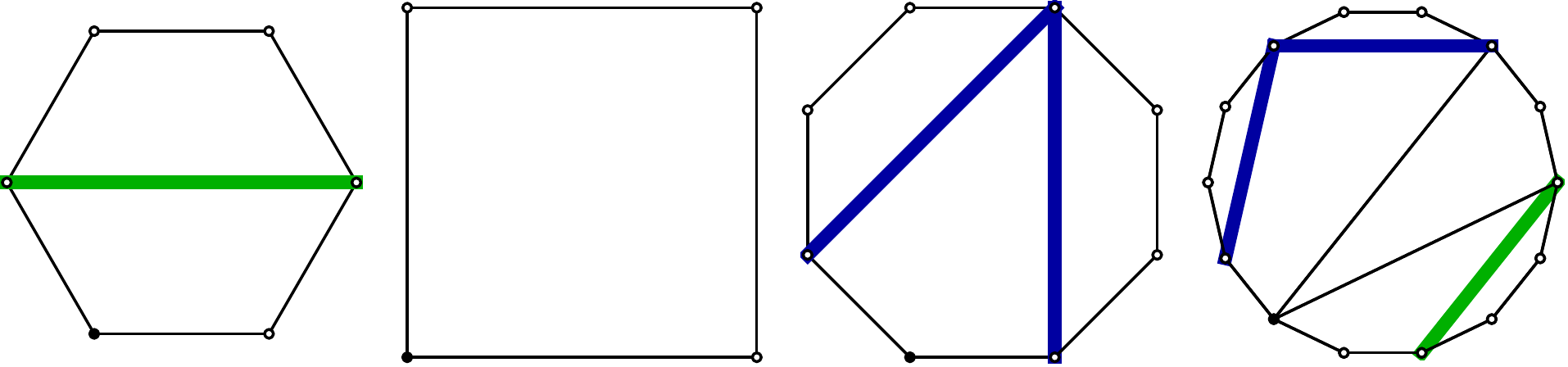} 
    \caption{Illustration of construction $\li$}
    \label{fig:expl-L}
  \end{figure}

\subsection{Rank generating function}

Let $G_m$ be the generating function for the elements of all posets
$\P_{m,n}$, according to their size $n$ and their rank, namely
\begin{equation}
  G_m = \sum_{n\geq 1} x^n \sum_{Q \in \P_{m,n}} z^{\rk{Q}}.
\end{equation}

The generating function $G_m$ satisfies
\begin{equation}
  \label{eq_G}
  G_m = \frac{F_m(z x)/z}{1-F_m(z x)/z}
\end{equation}
where $F_m$ is the generating series for the final elements. Indeed,
any $M$-angulation can be written uniquely as a list of final
$M$-angulations by cutting along the diagonal edges shared with $Q_0$. The
rank parameter is multiplicative along this decomposition. And the
rank of final $M$-angulations is just their size minus $1$, so that
the generating series of final $M$-angulations according to size and
rank is just $F_m(z x)/z$.

Using Lagrange inversion followed by a simple summation of binomial
coefficients, one can deduce from \eqref{eq_T}, \eqref{eq_F} and \eqref{eq_G} that the
rank generating function of $\P_{m,n}$ is given by
\begin{equation}
  \sum_{k=0}^{n-1}\frac{n - k}{n} \binom{m n + k - 1}{k} z^k.
\end{equation}

For $m=1$, this enumeration according to rank was already done in \cite[\S 3]{csar14}.

\subsection{Decomposition of intervals}

Let us now study the intervals in the posets $\P_{m,n}$.

\smallskip

An interval $A$ is a pair of elements $(A^-,A^+)$ in $\P_{m,n}$ that
satisfies $A^- \leq A^+$. In every finite poset, the number of intervals
is also the dimension of the incidence algebra.

Let us call an interval $A$ \textit{initial} if its lower bound $A^-$
is $Q_0$. The set of initial intervals can be identified with the set
$\Q_{m,n}$ of $M$-angulations in $\P_{m,n}$. Indeed, $Q_0$ is smaller
than all $M$-angulations by Lemma \ref{descente_possible}.

Let $\sI_{m,n}$ be the set of intervals in $\P_{m,n}$. Let $I_m = \sum_{n \geq 1} \# \sI_{m,n} x^{n}$ be the generating series for intervals. 

One can get a recursive decomposition for intervals, similar to the previous
decomposition for $M$-angulations. For this, one needs the following
construction.

Suppose that one has an $M$-angulation $B_0$ with $k$ regions, and a
list of $k$ final $M$-angulations $B_1, \dots, B_k$. From this data,
one can build an $M$-angulation $G(B_0 ; B_1, \dots, B_k)$ as
follows. First build the $M$-angulation
$\li^{-1}(B_1, \dots, B_k)$. Removing its $k - 1$ initial
diagonals creates a region with $m k + 2$ sides. Place $B_0$ inside
this region to define the $M$-angulation $G(B_0 ; B_1, \dots, B_k)$.
See Figure \ref{fig:expl-G} for an example.

If $B_0$ is $Q_0$, the construction $G$ is just the inverse of the $\li$ map.

  \begin{figure}[ht!] 
    \centering \includegraphics[scale=0.6]{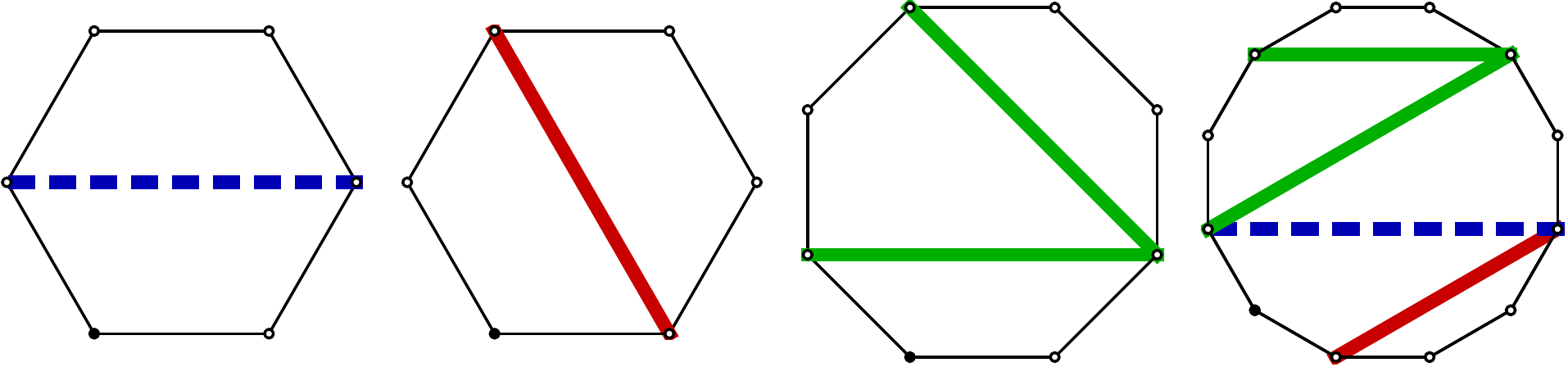} 
    \caption{ $B_0$, $B_1$, $B_2$ and $G(B_0; B_1, B_2)$}
    \label{fig:expl-G}
  \end{figure}

\medskip

Given an interval $A$, one can apply the map $\li$ to its
bottom element $A^-$. This gives a list of final $M$-angulations
$A^-_1, \dots, A^-_k$, such that
\begin{equation}
  A^- = G(Q_0; A^-_1, \dots, A^-_k).
\end{equation}
\begin{prop}
  \label{top_decompo}
  There exists a unique $M$-angulation $A^+_0$ such that
  \begin{equation}
    A^+ = G(A^+_0; A^-_1, \dots, A^-_k).
  \end{equation}
\end{prop}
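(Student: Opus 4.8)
The plan is to show that the entire interval $[A^-,A^+]$ is obtained from $A^-$ by altering it only inside the $(mk+2)$-gon that remains once the initial diagonals of $A^-$ have been deleted, so that $A^+_0$ is forced to be the restriction of $A^+$ to that region.

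First I fix notation. Let $d_1,\dots,d_{k-1}$ be the initial diagonals of $A^-$ (those lying in $Q_0$), listed counter-clockwise; they cut the $(mn+2)$-gon into wedges $W_1,\dots,W_k$ at the apex $0$, and $A^-$ restricts to $A^-_i$ on $W_i$. Let $R$ be the region obtained from $A^-$ by removing $d_1,\dots,d_{k-1}$: it is the union of the $k$ regions of $A^-$ incident to $0$ (one root region of each $A^-_i$), it contains the vertex $0$, and, as already noted, it is an $(mk+2)$-gon; its boundary consists of polygon edges and diagonals of the $A^-_i$, none of the latter passing through $0$. Recall that the construction $G$ fills $R$ with an $M$-angulation whose apex is the distinguished vertex $0$ of $R$, so that the hypothesis $A^-=G(Q_0;A^-_1,\dots,A^-_k)$ says exactly that $R$ is filled with the fan.

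Now choose a saturated chain $A^-=Q^{(0)}\lessdot Q^{(1)}\lessdot\cdots\lessdot Q^{(r)}=A^+$ in $\P_{m,n}$ and prove, by induction on $t$, that $Q^{(t)}=G(B^{(t)};A^-_1,\dots,A^-_k)$ for some $B^{(t)}\in\Q_{m,k}$; equivalently, that $Q^{(t)}$ contains every diagonal of every $A^-_i$ while all of its remaining diagonals lie inside $R$, where $Q^{(t)}$ then restricts to an $M$-angulation $B^{(t)}$ of the $(mk+2)$-gon $R$ with apex $0$. The base case is the hypothesis. For the inductive step, since the $A^-_i$ are final, the diagonals of $Q^{(t)}$ passing through $0$ are precisely those of $B^{(t)}$ through its apex; hence the cover $Q^{(t)}\lessdot Q^{(t+1)}$ flips one such diagonal $d$, which lies inside $R$. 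Because $\partial R$ is made of polygon edges and diagonals of the $A^-_i$, it consists of edges of $Q^{(t)}$; so $R$ is a union of regions of $Q^{(t)}$, and the two regions bordering $d$ lie in $R$, that is, they are two regions of $B^{(t)}$. Deleting $d$ merges them into a $(2m+2)$-gon $P\subset R$ having $0$ as a vertex. This is the one genuinely geometric point: inside $P$ the diagonal $d$ joins $0$ to the vertex at distance $m+1$ from it along $\partial P$ (each of the two $(m+2)$-gons it separated contributing $m$ intermediate vertices), while, of the $m+1$ chords of a $(2m+2)$-gon that cut it into two $(m+2)$-gons, exactly one passes through any prescribed vertex; hence each of the $m$ possible replacements $d'$ for $d$ avoids $0$ and stays inside $P\subset R$. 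Consequently $Q^{(t+1)}$ still contains all diagonals of all the $A^-_i$ and has all its other diagonals inside $R$, i.e.\ $Q^{(t+1)}=G(B^{(t+1)};A^-_1,\dots,A^-_k)$, where $B^{(t+1)}$ is obtained from $B^{(t)}$ by flipping the fan diagonal $d$ to $d'$. This closes the induction.

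Taking $t=r$ gives $A^+=G(A^+_0;A^-_1,\dots,A^-_k)$ with $A^+_0:=B^{(r)}$, which proves existence. Uniqueness is then immediate: the region $R$ depends only on the list $(A^-_1,\dots,A^-_k)$ (through $\li^{-1}$ and the deletion of the initial diagonals), and from any $G(B_0;A^-_1,\dots,A^-_k)$ one recovers $B_0$ as its restriction to $R$, so $A^+_0$ must be the restriction of $A^+$ to $R$. Apart from the small planar lemma on flips isolated above, the whole argument is bookkeeping with the $\li$ and $G$ constructions; the one convention it relies on is that $G$ places its first argument with apex at the vertex $0$ of $R$ --- exactly the convention making $G(Q_0;\cdot)=\li^{-1}(\cdot)$ --- which is what makes ``a diagonal of $G(B_0;\cdots)$ through $0$'' coincide with ``a fan diagonal of $B_0$''.
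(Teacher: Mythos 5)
Your proof is correct and follows essentially the same route as the paper: uniqueness by restricting to the $(mk+2)$-gon region determined by $(A^-_1,\dots,A^-_k)$, and existence by induction along a saturated chain from $A^-$ to $A^+$, noting that each cover flips a diagonal through the vertex $0$, which must be a diagonal of the inner $M$-angulation, and that the replacement diagonal stays inside that region. You merely spell out the geometric details (the boundary of $R$ consisting of edges of $Q^{(t)}$, the flip staying inside the merged $(2m+2)$-gon) that the paper's proof leaves implicit.
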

\begin{proof}
  Uniqueness is clear by the definition of the construction $G$.

  Existence is proved by induction on the difference of initial
  diagonals in $A^-$ and $A^+$. If $A^+ = A^-$, then the only possible
  choice is $A^+_0 = Q_0$.

  Otherwise, let us consider a cover relation $A^- \leq Q' \triangleleft
  Q''$.
  Assume that $Q' = G(Q'_0; A^-_1, \dots, A^-_k)$ by induction.
  Because the cover relations flips an initial diagonal $d$ and does
  not change the other ones, the diagonal $d$ must in fact belong to
  $Q'_0$. Therefore one can flip it in $Q'_0$ to get $Q''_0$ such that
  $Q'' = G(Q''_0; A^-_1, \dots, A^-_k)$.
\end{proof}

Keeping the same notations, one also has the following result.
\begin{prop}
  \label{reduction_int}
  Every element of the interval $[A^-,A^+]$ can be uniquely written as
  $G(Q; A^-_1, \dots, A^-_k)$ for some $Q$ in $[Q_0, A_0^+]$.  The
  interval $A$ is isomorphic to the interval $[Q_0, A_0^+]$.
\end{prop}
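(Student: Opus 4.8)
The plan is to build the claimed isomorphism $[Q_0, A_0^+] \to [A^-, A^+]$ directly from the construction $G$, using Proposition \ref{top_decompo} as the backbone. First I would define the map $\varphi \colon Q \mapsto G(Q; A^-_1, \dots, A^-_k)$ for $Q$ ranging over $M$-angulations with $k$ regions, and check that it is a bijection onto its image: injectivity follows from the uniqueness already noted in the proof of Proposition \ref{top_decompo} (the pieces $A^-_1, \dots, A^-_k$ are fixed, so $G$ determines and is determined by $Q$). The heart of the argument is to show that $\varphi$ restricts to a bijection between the interval $[Q_0, A_0^+]$ in $\P_{m,k}$ (note the parameter is now $k$, not $n$) and the interval $[A^-, A^+]$ in $\P_{m,n}$, and that it is an isomorphism of posets.

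The key step is to understand how the cover relations of $\P_{m,n}$ interact with the decomposition $G(Q; A^-_1,\dots,A^-_k)$. I would argue as in Proposition \ref{top_decompo}: since the $A^-_i$ are final $M$-angulations, they share no diagonal with $Q_0$; hence in $G(Q; A^-_1,\dots,A^-_k)$ the diagonals lying in $Q_0$ (the ambient fan on the $mn+2$-gon) are \emph{exactly} the images of the diagonals of $Q$ lying in the sub-fan $Q_0$ of the $mk+2$-gon, together with the $k-1$ "separating" diagonals that are always present and never flipped inside this interval. Flipping an initial diagonal of $G(Q;A^-_1,\dots,A^-_k)$ therefore corresponds precisely to flipping the corresponding initial diagonal of $Q$, in one of the $m$ ways, and $G$ is compatible with this flip. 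This shows that $\varphi$ sends cover relations to cover relations and reflects them, so it is an order isomorphism onto the order filter it generates.

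It then remains to identify the image of $[Q_0, A_0^+]$ with $[A^-, A^+]$. One inclusion is immediate: $\varphi(Q_0) = G(Q_0; A^-_1,\dots,A^-_k) = A^-$ by the remark that $G(Q_0;-)$ is $\li^{-1}$, and $\varphi(A_0^+) = A^+$ by Proposition \ref{top_decompo}; since $\varphi$ is order preserving, $\varphi([Q_0,A_0^+]) \subseteq [A^-,A^+]$. For the reverse inclusion, given $R \in [A^-, A^+]$, applying $\li$ to $R$ and comparing with $A^- \le R \le A^+$ shows (again because cover relations never touch the separating diagonals, only initial ones interior to the $B_0$-region) that $\li(R) = (A^-_1, \dots, A^-_k)$, so $R = G(Q; A^-_1, \dots, A^-_k)$ for a unique $Q$, and the chain of flips from $A^-$ up to $R$ and from $R$ up to $A^+$ transports back to a chain $Q_0 \le Q \le A_0^+$. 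Hence $R \in \varphi([Q_0, A_0^+])$, finishing the proof.

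The main obstacle I anticipate is the bookkeeping in the claim that $\li(R)$ is constant equal to $(A^-_1,\dots,A^-_k)$ on the whole interval $[A^-,A^+]$ — i.e. that no cover relation within the interval ever alters one of the separating diagonals or the internal structure of the final pieces $A^-_i$. This is intuitively clear from the fact that covers flip diagonals of $Q_0$ and the separating diagonals, though in $Q_0$, get "frozen" once the final pieces are glued on both sides; but making it precise requires carefully tracking which diagonals of $G(Q;A^-_1,\dots,A^-_k)$ lie in the ambient fan $Q_0$, and that is where one has to be attentive to the labelling conventions.
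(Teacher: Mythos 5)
Your overall strategy (the map $\varphi\colon Q \mapsto G(Q; A^-_1,\dots,A^-_k)$, compatibility of $G$ with flips of initial diagonals, and transport of chains of covers) is the same as the paper's, which deduces the statement from the inductive argument in the proof of Proposition \ref{top_decompo}. But your written justification contains a genuinely false step, precisely at the point you yourself flagged as the delicate one. You claim that in $G(Q; A^-_1,\dots,A^-_k)$ the initial diagonals are the images of the initial diagonals of $Q$ \emph{together with} the $k-1$ separating diagonals, ``always present and never flipped inside this interval'', and you deduce that $\li(R)=(A^-_1,\dots,A^-_k)$ for every $R\in[A^-,A^+]$. Both claims are wrong. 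In the construction $G$ the separating diagonals are \emph{removed} before $B_0$ is placed; they coincide with the diagonals of the sub-fan of the $mk+2$-gon, so they are present in $G(Q;A^-_1,\dots,A^-_k)$ exactly when $Q$ contains the corresponding sub-fan diagonal (they are not an extra, permanent family -- listing them ``together with'' the images of the sub-fan diagonals of $Q$ double-counts them). Moreover they are anything but frozen: the only initial diagonals of $A^-=G(Q_0;A^-_1,\dots,A^-_k)$ are these $k-1$ separating diagonals, so every cover leaving $A^-$ flips one of them. Consequently $\li(R)$ is \emph{not} constant on the interval: for instance if $A^+$ is a final $M$-angulation and $k\geq 2$, then $\li(A^+)=(A^+)$ has a single part. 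The decomposition $R=G(Q;A^-_1,\dots,A^-_k)$ is relative to the data of $A^-$ (the union of the regions of $A^-$ adjacent to the vertex $0$), and cannot be recovered as $\li(R)$.

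The correct route -- and the one the paper takes -- is to avoid $\li(R)$ altogether and argue by induction up the poset from $A^-$, exactly as in the proof of Proposition \ref{top_decompo} with $A^+$ replaced by an arbitrary $R\geq A^-$: if $Q'=G(Q'_0;A^-_1,\dots,A^-_k)$ and $Q'\triangleleft Q''$, the flipped initial diagonal passes through the vertex $0$, hence is a diagonal of $Q'_0$ inside the $mk+2$-gon, and its replacement also lies inside that region, so $Q''=G(Q''_0;A^-_1,\dots,A^-_k)$ with $Q'_0\triangleleft Q''_0$. This shows every element above $A^-$ has the form $G(Q;A^-_1,\dots,A^-_k)$ and that $\varphi$ matches cover relations in both directions, identifying the upper ideal of $A^-$ with a poset of type $\P_{m,k}$ and hence $[A^-,A^+]$ with $[Q_0,A^+_0]$. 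Your chain-transport sentence is essentially this argument, but as written it rests on the false constancy of $\li(R)$, so the surjectivity step needs to be replaced by the induction above.
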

\begin{proof}
  This result follows from the proof of Prop. \ref{top_decompo}. In
  fact, more is true: all elements greater than $A^-$ can be uniquely written
  $G(Q; A^-_1, \dots, A^-_k)$ for some $Q$ and this bijection identifies the
  upper ideal of $A^-$ with a smaller poset of type $\P$.
\end{proof}

Proposition \ref{top_decompo} implies the following decomposition.
\begin{prop}
  The map sending an interval $A$ to the pair $(A^+_0, \li(A^-))$  
  defines a bijection between intervals and pairs
  $(B_0 , (B_1, \dots, B_k))$ where $B_0$ is an $M$-angulation with $k$
  regions and $B_1, \dots, B_k$ are final $M$-angulations.
\end{prop}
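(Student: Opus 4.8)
The plan is to exhibit an explicit inverse to the map $A \mapsto (A^+_0, \li(A^-))$ and verify that the two composites are identities, relying on Propositions \ref{top_decompo} and \ref{reduction_int} and on the bijectivity of $\li$. First I would note that the map is well-defined: for an interval $A = (A^-, A^+)$, writing $(A^-_1, \dots, A^-_k) = \li(A^-)$, Proposition \ref{top_decompo} furnishes the unique $M$-angulation $A^+_0$ with $A^+ = G(A^+_0; A^-_1, \dots, A^-_k)$; since $G(B_0; B_1, \dots, B_k)$ is defined only when $B_0$ has $k$ regions, $A^+_0$ automatically has $k$ regions, so $(A^+_0, \li(A^-))$ is indeed a pair of the required type.

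For the inverse, given a pair $(B_0, (B_1, \dots, B_k))$ with $B_0$ an $M$-angulation with $k$ regions and $B_1, \dots, B_k$ final, I would set $A^- = \li^{-1}(B_1, \dots, B_k) = G(Q_0; B_1, \dots, B_k)$ and $A^+ = G(B_0; B_1, \dots, B_k)$. The only thing needing a proof is $A^- \leq A^+$, and here I would invoke the strengthened statement established in the proof of Proposition \ref{reduction_int}: the assignment $Q \mapsto G(Q; B_1, \dots, B_k)$ identifies the upper ideal of $A^-$ in $\P_{m,n}$ with a copy of $\P_{m,k}$. Since $Q_0$ is the minimum of $\P_{m,k}$, we have $Q_0 \leq B_0$ there, hence $A^- = G(Q_0; B_1, \dots, B_k) \leq G(B_0; B_1, \dots, B_k) = A^+$, so $(A^-, A^+)$ is a genuine interval.

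Finally I would check the two composites. Going forward then backward from an interval $A$: bijectivity of $\li$ gives $\li^{-1}(\li(A^-)) = A^-$, and $G(A^+_0; A^-_1, \dots, A^-_k) = A^+$ by the defining property of $A^+_0$, so $A$ is recovered. Going backward then forward from $(B_0, (B_1, \dots, B_k))$: since $G(Q_0; -)$ is by definition $\li^{-1}$, one has $\li(A^-) = (B_1, \dots, B_k)$, and because $A^+ = G(B_0; B_1, \dots, B_k)$, the uniqueness clause of Proposition \ref{top_decompo} forces $A^+_0 = B_0$. Hence both composites are the identity and the map is a bijection. I expect the only non-formal point to be the inequality $A^- \leq A^+$ for the reconstructed interval — precisely where the ``upper ideal'' refinement of Proposition \ref{reduction_int} is needed; everything else is bookkeeping with the already-established properties of $G$ and $\li$.
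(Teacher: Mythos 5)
Your proof is correct and follows essentially the same route as the paper, which deduces the bijection from Proposition \ref{top_decompo} and exhibits the same inverse $A^- = G(Q_0; B_1,\dots,B_k)$, $A^+ = G(B_0; B_1,\dots,B_k)$. The only difference is that you spell out the verification that the reconstructed pair satisfies $A^- \leq A^+$ via the upper-ideal identification from Proposition \ref{reduction_int}, a point the paper leaves implicit.
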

The inverse bijection is given by
\begin{equation}
  A^- = G(Q_0;  B_1, \dots, B_k) \quad\text{and}\quad  A^+ = G(B_0;  B_1, \dots, B_k).
\end{equation}

\begin{coro}
  The generating series of intervals can be expressed using
  those of $M$-angulations and final $M$-angulations as
  \begin{equation}
    \label{eq_I}
    I_m = T_m(F_m) = T_m(x (1 + T_m)^m).
  \end{equation}
\end{coro}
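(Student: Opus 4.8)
The plan is to read off \eqref{eq_I} directly from the bijection of the preceding
proposition, by tracking the size parameter (number of regions) through that bijection
and then translating the resulting counting identity into an identity of generating
series.

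First I would recall the correspondence: an interval $A \in \sI_{m,n}$ is sent to a
pair $(B_0 , (B_1, \dots, B_k))$ in which $B_0$ is an $M$-angulation with $k$ regions and
each $B_i$ is a final $M$-angulation, with $A^- = G(Q_0; B_1, \dots, B_k)$ and
$A^+ = G(B_0; B_1, \dots, B_k)$. The key point to check is that the size $n$ of the
interval $A$ — that is, the common number of regions of $A^-$ and $A^+$ — equals
$|B_1| + \dots + |B_k|$, where $|B_i|$ denotes the number of regions of $B_i$. This is a
bookkeeping argument inside the construction $G$: gluing the $B_i$ via $\li^{-1}$ produces
an $M$-angulation with $\sum_i |B_i|$ regions and exactly $k-1$ initial diagonals; removing
those $k-1$ diagonals coalesces $k$ regions into a single $(m k + 2)$-gon, and reinserting
$B_0$ — which, being an $M$-angulation of an $(mk+2)$-gon, has $k$ regions — in their place
restores precisely $\sum_i |B_i|$ regions. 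In effect, along the bijection each of the $k$
regions of $B_0$ is inflated into the corresponding final $M$-angulation $B_i$.

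Once this is established, the counting identity is immediate. For each fixed $k \geq 1$, the
intervals whose associated $B_0$ has $k$ regions are enumerated, with $x$ marking the total
region count $n = \sum_i |B_i|$, by $(\#\Q_{m,k}) \cdot F_m^k$: the factor $\#\Q_{m,k}$ counts
the choices of $B_0$, and $F_m^k$ is the generating series for $k$-tuples $(B_1, \dots, B_k)$
of final $M$-angulations, $x$ marking $\sum_i |B_i|$. Summing over $k$ gives
$I_m = \sum_{k \geq 1} (\#\Q_{m,k})\, F_m^k = T_m(F_m)$, and substituting
$F_m = x (1 + T_m)^m$ from \eqref{eq_F} yields \eqref{eq_I}. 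The main (and essentially only
non-formal) obstacle is the region-count verification for $G$ described above; everything else
is a routine composition of formal power series, which is well defined since $F_m$ has no
constant term. A final sanity check would be to compare the first few coefficients of
$T_m(x(1+T_m)^m)$ with direct interval counts in $\P_{m,1}$ and $\P_{m,2}$.
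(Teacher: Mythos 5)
Your proposal is correct and follows essentially the paper's route: the corollary is read off from the preceding bijection $A \mapsto (A^+_0,\li(A^-))$ by translating it into the substitution $I_m=\sum_{k\geq 1}\#\Q_{m,k}\,F_m^k=T_m(F_m)$, with the second equality coming from \eqref{eq_F}. Your extra bookkeeping check that $G(B_0;B_1,\dots,B_k)$ has $\sum_i |B_i|$ regions is exactly the (implicit) point that makes the composition of series legitimate, so nothing is missing.
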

The second equality follows from \eqref{eq_F}.

\section{Isomorphism types of intervals}

The aim of this section is to give a description of the intervals as posets, and to prove in particular that they are distributive lattices with Möbius numbers in $\{-1,0,1\}$.

By Proposition \ref{reduction_int}, every interval is isomorphic to an
initial interval. It is therefore enough to study initial intervals.

Let us call an interval \textit{initial-final} if its minimum is $Q_0$ and its
maximum is a final quadrangulation.

\begin{prop}
  Every initial interval is isomorphic to a product of initial-final intervals.
\end{prop}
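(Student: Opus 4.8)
The plan is to use the decomposition $A^- = G(Q_0; A^-_1, \dots, A^-_k)$ of the minimum $Q_0$ of an initial interval via the map $\li$, together with Proposition \ref{reduction_int}, which reduces an arbitrary interval to an initial one. So let $A = [Q_0, A^+]$ be an initial interval, and apply $\li$ to $A^+$ itself (not to $A^-$): this cuts $A^+$ along its initial diagonals, producing a list of final $M$-angulations $B_1, \dots, B_k$, so that $A^+ = G(Q_0; B_1, \dots, B_k) = \li^{-1}(B_1, \dots, B_k)$ has exactly $k$ regions adjacent to the apex $0$.

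The key claim is that the interval $[Q_0, A^+]$ is, as a poset, the product $\prod_{i=1}^{k} [Q_0^{(i)}, B_i]$, where $Q_0^{(i)}$ denotes the fan in the $(m r_i + 2)$-gon on which $B_i$ lives ($r_i$ being its number of regions). The first step is to observe that the elements of $[Q_0, A^+]$ are exactly the $M$-angulations obtained from $Q_0$ by flipping diagonals included in $Q_0$, with the result still below $A^+$. Because a diagonal of $Q_0$ can only be flipped into the $(2m+2)$-gon formed by the two regions it separates, and because the regions of $\li^{-1}(B_1,\dots,B_k)$ adjacent to $0$ tile the polygon, each flip takes place entirely inside one of the $k$ sub-polygons carved out by $A^+$; moreover to stay below $A^+$ the restriction of the flipped $M$-angulation to that sub-polygon must stay below $B_i$. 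Conversely, any choice of an element $Q^{(i)} \in [Q_0^{(i)}, B_i]$ in each piece can be assembled (via the gluing inverse to $\li$, i.e. via $G$) into an element of $[Q_0, A^+]$. The second step is to check that this assembly map is an order isomorphism: cover relations in $[Q_0, A^+]$ are single flips of a diagonal of $Q_0$, hence single flips inside one piece, which is exactly a cover relation in the corresponding factor with the other factors fixed — this is precisely the cover structure of a product poset. Finally, one notes that each factor $[Q_0^{(i)}, B_i]$ is an initial-final interval by construction, since $B_i$ is final in its own polygon.

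The main obstacle will be the bookkeeping in the first step: one must argue carefully that ``being below $A^+$'' is genuinely a conjunction of local conditions, one per piece, with no interaction between pieces. This rests on the fact, already used in the proof of Proposition \ref{top_decompo}, that the initial diagonals of $Q_0$ that survive in a given $Q \in [Q_0, A^+]$ are unaffected by flips performed elsewhere, so the sub-polygon decomposition induced by $A^+$ is compatible with every intermediate $M$-angulation — concretely, $Q = G(Q; \,\cdot\,)$ has to be replaced by the statement that $Q$ restricted to the $i$-th sub-polygon lies in $[Q_0^{(i)}, B_i]$ and that these restrictions are independent. Once that compatibility is in hand, the isomorphism with the product is routine, and the result follows by combining it with Proposition \ref{reduction_int}.
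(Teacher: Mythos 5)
Your proposal is correct and takes essentially the same route as the paper: both cut $A^+$ along its initial diagonals (the diagonals of $Q_0$ contained in $A^+$), note that these diagonals persist in every element of the interval because upward flips only remove diagonals of $Q_0$, and conclude that $[Q_0,A^+]$ factors as the product of the initial-final intervals $[Q_0^{(i)},B_i]$. The paper's own proof is simply a terser version of this cutting argument, phrased without the $\li$/$G$ notation.
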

\begin{proof}
  Let $A=[Q_0,A^+]$ be an initial interval. Consider the set of
  diagonals of $Q_0$ that belong to $A^+$. Then one can cut both $Q_0$
  and $A^+$ along these diagonals. Every piece $A_i^-$ of $Q_0$ is a smaller
  $Q_0$. Every piece $A_i^+$ of $A^+$ is a final quadrangulation. By definition of the
  partial order, the same diagonals belong to every element of $A$. Cutting along
  them gives an isomorphism with the product of the intervals $[A_i^-,A_i^+]$, which are
  initial-final intervals.
\end{proof}

Let us now proceed to a more subtle decomposition.

Let $Q$ be a final triangulation. Let $R_0$ be the unique region in
$Q$ with $0$ in its boundary. Removing $R_0$ from $Q$ leaves a certain
number $k$ of polygons, that will be called the \textit{blocks}. Let
us call $k$ the \textit{width} of $Q$. The width can be $0$ only if
$Q$ is reduced to $R_0$. Otherwise it lies between $1$ and $m$.

\begin{prop}
  Every initial-final interval $A$ is isomorphic to a product of
  initial-final intervals of width $1$.
\end{prop}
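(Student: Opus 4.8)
The plan is to make the interval $[Q_0,A^+]$ completely explicit as a lattice of subsets of the diagonal set of $A^+$, and then to observe that this description splits as a product over the blocks of $A^+$. Two preliminary facts are needed. First, distinct proper diagonals $d$ have distinct binomials $m_d=v_{j+1}-u_i$, because the binomial records the indices $i\le j$ and the letters $u,v$, and an endpoint of $d$ is pinned down by its letter together with the index of the first (resp.\ last) diagonal of $Q_0$ that $d$ crosses; consequently each $P_Q=\prod_{d\in Q\setminus Q_0}m_d$ is squarefree and the set $Q\setminus Q_0$ can be read off from $P_Q$. Second, a set $D$ of pairwise non-crossing proper diagonals, none through the apex $0$, equals $Q\setminus Q_0$ for a unique $M$-angulation $Q$ if and only if every region cut out by $D$ that does not contain $0$ is already an $(m+2)$-gon: a legal diagonal issued from $0$ is automatically a diagonal of $Q_0$, and the region of $D$ containing $0$ admits a unique completion into $(m+2)$-gons by a fan of such diagonals, because the residues modulo $m$ of the boundary vertices of that region increase by one at each step.

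Combining these with the isomorphism $Q\mapsto P_Q$ from $\P_{m,n}$ to the divisibility poset of the $P_Q$, and with the finality of $A^+$ (which makes $P_{A^+}=\prod_{d\in A^+}m_d$), I would identify $[Q_0,A^+]$ with $(\mathcal D,\subseteq)$, where $\mathcal D$ consists of the subsets $D$ of the set $\mathrm{diag}(A^+)$ of diagonals of $A^+$ such that every region of $D$ avoiding $0$ is an $(m+2)$-gon. Next I would unpack the structure of $A^+$: its region $R_0$ containing $0$ has non-apex sides $e_1,\dots,e_m$; let $T$ be the set of indices $t$ for which the block $C_t$ grafted along $e_t$ is non-trivial, and set $D^{(t)}=\{e_t\}\cup\{\text{internal diagonals of }C_t\}$, so that $\mathrm{diag}(A^+)=\bigsqcup_{t\in T}D^{(t)}$. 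For $D\subseteq\mathrm{diag}(A^+)$ write $D_t=D\cap D^{(t)}$.

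The heart of the argument is the claim that every region of $D$ not containing $0$ lies entirely inside the sub-polygon $P_t$ spanned by a single block $C_t$ (a region touching two blocks, or reaching a block through an absent side $e_t$, would have to contain the apex). Granting this, a region of $D$ avoiding $0$ is, for the appropriate $t$, either an arbitrary region of $P_t$ cut out by $D_t$ when $e_t\in D$, or one of the regions of $P_t$ cut out by $D_t$ other than the one adjacent to $e_t$ when $e_t\notin D$; hence ``$D\in\mathcal D$'' is the conjunction over $t\in T$ of the same condition on $D_t$, and $D\mapsto(D_t)_{t\in T}$ is an isomorphism $(\mathcal D,\subseteq)\cong\prod_{t\in T}(\mathcal D^{(t)},\subseteq)$ for suitable sets $\mathcal D^{(t)}$ of subsets of $D^{(t)}$. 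Finally, for $t\in T$ let $A^{+,(t)}$ be the final $M$-angulation of a smaller polygon obtained by grafting $C_t$ on one side of a single $(m+2)$-gon with apex $0$ and nothing on the other sides; it has width $1$, and running the identification of the previous step inside this smaller polygon recognizes $[Q_0,A^{+,(t)}]$ as exactly $(\mathcal D^{(t)},\subseteq)$, since the sub-polygon spanned by $C_t$ together with its distinguished side is unchanged. Thus $[Q_0,A^+]\cong\prod_{t\in T}[Q_0,A^{+,(t)}]$ is a product of initial-final intervals of width $1$; when $T=\varnothing$, so that $A^+=R_0$, the interval is a single point, that is, the empty product.

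The step I expect to be delicate is the locality claim of the last paragraph — that the single region $R_0$ shared by all the blocks creates no order relations between different blocks — which rests on a careful description of how the faces of a subset $D\subseteq\mathrm{diag}(A^+)$ sit relative to the sides $e_t$, and on matching the abstract factors $\mathcal D^{(t)}$ with the honest width-$1$ intervals $[Q_0,A^{+,(t)}]$. The two preliminary facts are routine but genuinely used, since the whole identification $[Q_0,A^+]\cong(\mathcal D,\subseteq)$ depends on them.
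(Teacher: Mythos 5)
Your argument is correct, but it takes a genuinely different route from the paper. The paper works directly with flips: it introduces the auxiliary diagonals joining the apex $0$ to the non-extremal vertices of the region $R_0$ of $A^+$, uses the labelling of vertices modulo $m$ to show that no element of $[Q_0,A^+]$ contains or crosses these diagonals, and then cuts along them to project onto width-$1$ initial-final intervals. You instead convert the interval into set combinatorics: using the earlier proposition that $Q\mapsto P_Q$ identifies $\P_{m,n}$ with the divisibility order, plus the squarefreeness of $P_Q$ and the injectivity of $d\mapsto m_d$, you get that $Q'\le A^+$ exactly when the set of non-initial diagonals of $Q'$ is contained in $\mathrm{diag}(A^+)$, and your characterization of which subsets $D\subseteq\mathrm{diag}(A^+)$ arise (all regions of $D$ avoiding $0$ are $(m+2)$-gons, with the unique fan completion of the $0$-region) is correct; the block-by-block splitting of this condition, resting on the observation that a region of $D$ avoiding $0$ cannot meet the interior of $R_0$ and hence lies in a single block, is also sound and matches the width-$1$ intervals as claimed. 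What your route buys is an explicit model of every initial-final interval as a poset of diagonal subsets, from which the product decomposition is transparent; what it costs is dependence on the divisibility proposition in its strong direction (divisibility implies comparability), which is exactly the nontrivial combinatorial content, whereas the paper's proof here is self-contained at the level of flips. Two small points to make explicit if you write this up: in defining $A^{+,(t)}$ you must graft $C_t$ on a side of the $(m+2)$-gon \emph{not} adjacent to the apex (otherwise $A^{+,(t)}$ is not final of width $1$), and the injectivity of $d\mapsto m_d$ should be argued by noting that a starting (resp.\ ending) vertex is the unique vertex with its letter lying strictly before the first (resp.\ strictly after the last) crossed diagonal of $Q_0$ within its region of the fan.
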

\begin{proof}
  The main idea is that the flips downwards from $Q$ happen completely
  independently in distinct blocks. Let now us give a detailed
  argument for this independence.

  Let us consider the set $D$ of diagonals that go from vertex $0$ to
  one of the vertices of the region $R_0$, except the two vertices
  that are neighbors of $0$. These diagonals are inside the region
  $R_0$ and therefore do not belong to $Q$. The extremities of these
  diagonals receive exactly once every label from $0$ to $m-2$ in
  clockwise order, and therefore never get the label $m-1$. Note that
  the extremities of the initial diagonal are labeled by $m-1$.

  Let us show by induction downward from $Q$ that for every
  $Q' \in [Q_0,Q]$, the diagonals in $D$ do not belong to $Q'$ and do
  not cross any diagonal of $Q'$. This is clear for $Q$. The effect of
  a down flip is to replace a diagonal by an initial diagonal. This
  initial diagonal is not in $D$ and does not cross any diagonal in
  $Q$, because of the labeling of its extremity is $m-1$.

  One can therefore cut along the diagonals in $D$. Replacing every
  block but a fixed one by a trivial block gives a map to an initial-final
  interval of width $1$. Taking the product of all these maps gives
  the desired isomorphism.

\end{proof}

\begin{prop}
  The number of elements covered by the maximum in a initial-final
  interval $[Q_0,Q]$ is the width of $Q$.
\end{prop}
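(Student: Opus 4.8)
The plan is to count the elements covered by the maximum $Q$ of an initial-final interval $[Q_0, Q]$ directly, by analysing which individual flips produce a cover relation $Q' \triangleleft Q$ with $Q'$ still in the interval. Recall from the description of cover relations that $Q'$ is covered by $Q$ precisely when $Q$ is obtained from $Q'$ by flipping a single diagonal of $Q_0$ present in $Q'$; equivalently, going downward from $Q$, each cover $Q' \triangleleft Q$ replaces exactly one diagonal $d$ of $Q$ (a diagonal \emph{not} in $Q_0$, since $Q$ is final) by a diagonal $d'$ of $Q_0$ cutting the same $(2m+2)$-gon. So I need to count the diagonals $d$ of $Q$ that can be flipped to a diagonal of $Q_0$.

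First I would recall, from the proof of the width-$1$ decomposition just above, the key structural fact: the region $R_0$ containing the apex $0$ has its non-neighbouring boundary vertices labelled by every residue in $\{0,\dots,m-2\}$ exactly once, and in particular no boundary vertex of $R_0$ other than its two extreme ones is labelled $m-1$; but the vertices of $Q_0$ joined to the apex are exactly those labelled $m-1$. Hence the only diagonals of $Q_0$ that can appear after a single downward flip are those with one endpoint among the two extreme vertices of $R_0$ (the neighbours, in clockwise order around the polygon boundary, of the vertices adjacent to $0$) — and these are precisely the diagonals separating $R_0$ from each of its neighbouring regions. There are exactly as many such diagonals as neighbours of $R_0$, which by definition is the width $k$ of $Q$ (removing $R_0$ leaves $k$ blocks, each block sharing exactly one edge — a diagonal of $Q$ — with $R_0$).

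Next I would check the two directions. For existence: for each block, flipping the diagonal $d$ separating $R_0$ from that block can indeed be done so that the new diagonal lies in $Q_0$ — one flips $d$ inside the $(2m+2)$-gon formed by $R_0$ together with the adjacent region $R'$ of that block, and the argument of Lemma~\ref{descente_possible} (applied to $R'$, whose non-extreme boundary vertices contain exactly one representative of each residue class mod $m$) shows there is a unique vertex of $R'$ joined to $0$ in $Q_0$, giving a valid flip into a diagonal $d_0$ of $Q_0$ that crosses only $d$. This produces $Q' \triangleleft Q$ with $Q' \in [Q_0,Q]$ (indeed $Q'$ has one more diagonal in common with $Q_0$, so $Q_0 \leq Q' \leq Q$). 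Distinct blocks clearly give distinct flips, so at least $k$ elements are covered by $Q$. For the reverse bound: any down-flip from $Q$ must introduce a diagonal of $Q_0$, i.e.\ one incident to $0$, i.e.\ one with an endpoint labelled $m-1$; by the labelling fact above, the only diagonals of $Q$ that can be flipped to produce such a diagonal are those incident to one of the two special vertices of $R_0$, and these are exactly the $k$ block-separating diagonals. Hence at most $k$ elements are covered.

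The main obstacle I anticipate is being fully rigorous about the claim that ``the only diagonals of $Q$ flippable to a diagonal of $Q_0$ are those separating $R_0$ from a block.'' One must be careful with the case $k=0$ (then $Q=R_0=Q_0$ is a single $M$-angle, the interval is trivial, and the maximum covers nothing — consistent with width $0$), and with the boundary-index bookkeeping when a flippable diagonal of $Q$ is itself incident to a neighbour of $0$: one needs that flipping such a diagonal can only reach a $Q_0$-diagonal if the quadrilateral $(2m+2)$-gon it borders actually contains a vertex labelled $m-1$ in interior position, which happens exactly when one side of that gon is a block adjacent to $R_0$. The cleanest way to handle this is to phrase everything in terms of the residue-class labelling and the ``exactly one representative per class'' property of region boundaries established in Lemma~\ref{descente_possible}, rather than arguing geometrically; the rest is then bookkeeping.
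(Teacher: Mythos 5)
Your overall plan (count the down-flips from $Q$ and match them with the blocks) is the same as the paper's, and your existence direction is fine: one valid down-flip per block, obtained from the unique interior boundary vertex of the adjacent region $R'$ lying in the residue class of the endpoints of $Q_0$-diagonals, exactly as in the proof of Lemma~\ref{descente_possible}. The genuine gap is in your upper bound. The characterization you use there, namely that ``the only diagonals of $Q$ that can be flipped to produce a diagonal of $Q_0$ are those incident to one of the two special (extreme) vertices of $R_0$, and these are exactly the $k$ block-separating diagonals,'' is false as soon as $m \geq 3$. Take $m=3$, $n=2$: the octagon with $Q_0=\{04\}$ and the final $5$-angulation $Q=\{26\}$. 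Here $R_0$ has vertices $6,7,0,1,2$, the two neighbours of the apex are $1$ and $7$, and the unique block-separating diagonal is $26$, incident to neither of them, yet it is precisely the diagonal whose down-flip produces $Q_0$. Under your criterion no down-flip would exist, contradicting your own lower bound; the point is that the sides of $R_0$ not incident to the apex need not touch the apex's neighbours once $m\geq 3$. The same confusion appears earlier, where you assert that the diagonals of $Q_0$ that can appear after a down-flip have an endpoint among the two extreme vertices of $R_0$ and ``are precisely the diagonals separating $R_0$ from its neighbouring regions'': diagonals of $Q_0$ all emanate from the apex, never end at those two vertices, and, since $Q$ is final, never coincide with separating diagonals of $Q$; in the example above the appearing diagonal is $04$.

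The correct necessity argument, which is what the paper relies on, is purely about crossings: the added diagonal $d_0 \in Q_0$ starts at the apex, which lies inside $R_0$, and in a flip it may cross only the removed diagonal $d'$; hence $d'$ must be the side of $R_0$ through which $d_0$ exits, i.e.\ one of the $k$ block-separating diagonals, and $d_0$ must then end at an interior boundary vertex of the adjacent region $R'$, of which there is exactly one admissible choice by the residue-class count from Lemma~\ref{descente_possible}. This gives at most one covered element per block --- a point your write-up also needs but does not address, since a priori one separating diagonal could admit several distinct down-flips into $Q_0$ --- and together with your existence step it yields exactly $k$, the width of $Q$.
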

\begin{proof}
  In the proof of Lemma \ref{descente_possible}, it was shown that in every
  polygon of $Q$ minus $R_0$, there is exactly one initial diagonal
  $d$ that cross just one diagonal $d'$ of $Q$. The set of diagonals
  of $Q$ that can be flipped down in the poset is exactly the
  collection of these $d'$, and their number is therefore the width of
  $Q$.
\end{proof}

In particular, initial-final intervals of width $1$ have a maximum
that covers a unique element.

\begin{coro}
  The intervals in $\P_{m,n}$ can be build iteratively by either
  adding a maximum to a smaller one or by taking a product of several
  smaller ones.
\end{coro}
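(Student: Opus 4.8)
The statement to prove is the final Corollary: that every interval in $\P_{m,n}$ can be built up iteratively from trivial (one-element) intervals by two operations, namely (i) adjoining a new maximum and (ii) taking a finite direct product. The plan is to assemble the structural propositions of this section into a single inductive recipe, where the induction is on the number of regions $n$ (equivalently on the number of elements of the interval, or on the rank $\rk A^+ - \rk A^-$).

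First I would reduce to the core case using the reductions already established. By Proposition \ref{reduction_int} every interval is isomorphic to an \emph{initial} interval $[Q_0, A^+]$, so it suffices to treat those. By the first Proposition of this section, every initial interval decomposes as a direct product of \emph{initial-final} intervals, and by the next Proposition each initial-final interval decomposes as a direct product of initial-final intervals of \emph{width one}. So after applying operation (ii) twice (conceptually, these are exactly the product steps), I am reduced to showing that an initial-final interval $[Q_0, Q]$ of width one is obtained from a smaller interval by adjoining a maximum.

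For that last step I would argue as follows. Let $[Q_0,Q]$ have width one. By the penultimate Proposition, $Q$ covers a unique element $Q'$ in the interval, i.e.\ there is exactly one diagonal $d'$ of $Q$ that can be flipped down, producing $Q' \triangleleft Q$. I claim $[Q_0,Q] = [Q_0,Q'] \cup \{Q\}$, with $Q$ strictly above everything in $[Q_0,Q']$: indeed any $R$ with $Q_0 \le R \le Q$ and $R \neq Q$ lies below $Q$, hence below some element that $Q$ covers, but $Q'$ is the only such element, so $R \le Q'$; thus $[Q_0,Q']$ is exactly the set of elements strictly below $Q$ inside the interval, and adjoining $Q$ on top recovers $[Q_0,Q]$. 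Since $[Q_0,Q']$ has strictly smaller rank, the outer induction applies to it, and we have expressed $[Q_0,Q]$ as "a smaller interval with a maximum adjoined". Running the three reductions together (product into initial-finals, product into width-one pieces, maximum-adjunction on each width-one piece, then recurse) yields the iterative construction, and a trivial interval $[Q_0,Q_0]$ is the base case. The main obstacle is purely bookkeeping: one must check that "adjoin a maximum" and "take a product" as used in the recursion genuinely terminate, i.e.\ that each recursive call strictly decreases the size parameter. This is immediate here because the product pieces each have fewer regions (the cuts are along genuine diagonals) and the maximum-adjunction step strictly drops the rank, so no circularity arises; the only care needed is to note that when the width-one initial-final interval is itself trivial, no maximum-adjunction step is needed, which is consistent with the base case.
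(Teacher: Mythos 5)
Your proposal is correct and follows essentially the same route as the paper: reduce to initial intervals via Proposition \ref{reduction_int}, decompose into initial-final intervals, then into width-one pieces living in smaller polygons, and finish by observing that a width-one initial-final interval is a smaller interval with a maximum adjoined, with the double induction on height and polygon size ensuring termination. Your explicit verification that the unique element covered by $Q$ lies above every other element of $[Q_0,Q]$ is a detail the paper leaves implicit, but the argument is the same.
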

\begin{proof}
  By induction on both the height (difference of ranks) of the
  intervals and the size of the ambient polygon. Every interval is a
  product of initial-final ones. Every initial-final interval is a
  product of initial-final intervals of width $1$. In both cases, if
  the product is not reduced to one element, the factors live in
  strictly smaller polygons.

  Every initial-final interval of width $1$ is obtained by adding a maximum
  to an interval, which has smaller height.
\end{proof}

Let us define a \textit{forest poset} as a poset where every element
is covered by at most one element. These posets can be build
iteratively from the empty poset by two operations, namely adding a
maximum element and taking the disjoint union. Their Hasse diagrams are forests of rooted trees, where the roots are the maximal elements.

\begin{prop}
  Every interval in $\P_{m,n}$ is a distributive lattice, isomorphic
  to the lattice of order ideals of a forest poset.
\end{prop}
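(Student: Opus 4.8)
The plan is to combine the structural results already established in this section with the classical characterization of finite distributive lattices. By the previous corollary, every interval in $\P_{m,n}$ is obtained from the one-point poset by iterating two operations: adding a maximum element, and taking products (equivalently, for Hasse diagrams, disjoint unions with the roots glued as maxima). So I would argue by induction on the number of elements of the interval, showing that the class of posets isomorphic to the lattice of order ideals of some forest poset is closed under these two operations, and that such posets are automatically distributive lattices.

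For the base case, the one-point interval is the lattice of order ideals of the empty forest poset. For the inductive step, first consider a product $A = A' \times A''$ of two smaller intervals. By induction $A' \cong \J(P')$ and $A'' \cong \J(P'')$ for forest posets $P', P''$ (writing $\J(P)$ for the poset of order ideals of $P$ ordered by inclusion). Then $A \cong \J(P' \sqcup P'')$, since an order ideal of a disjoint union is just a pair of order ideals, and $P' \sqcup P''$ is again a forest poset; distributivity is preserved under products. Second, consider an interval $A$ obtained by adding a new maximum $\hat 1$ to a smaller interval $B$. By the preceding propositions, the relevant case is an initial-final interval of width $1$, whose maximum covers a \emph{unique} element; so $B$ has a maximum $b$, and $\hat 1$ covers only $b$. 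By induction $B \cong \J(P)$ for a forest poset $P$, where the maximum $b$ corresponds to the full ideal $P$ itself. Now form $P^{\star} = P \cup \{r\}$ by adjoining one new element $r$ above everything in $P$ (so $r$ becomes the unique new maximum of $P^{\star}$). Then $P^{\star}$ is still a forest poset — adding a single new maximum is exactly one of the two generating operations for forest posets — and its order ideals are precisely the order ideals of $P$ together with one extra ideal, namely all of $P^{\star}$; the inclusion order on these recovers $B$ with one new top element covering the old top, i.e. $\J(P^{\star}) \cong A$. Since $\J(P^{\star})$ is a sublattice-closure situation that stays inside the distributive lattice of \emph{all} subsets of $P^{\star}$, it is distributive, or one can simply invoke the fact that $\J$ of any poset is a distributive lattice.

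The main technical point — and the one I would be most careful about — is verifying that the second operation really only ever needs to be applied when the maximum covers a unique element. This is exactly what the propositions above guarantee: an arbitrary interval decomposes as a product of initial-final intervals, each of which decomposes as a product of initial-final intervals of \emph{width $1$}, and in a width-$1$ initial-final interval the maximum covers exactly one element. So every time the "add a maximum" operation is invoked in the iterative construction, it is applied to a poset with a greatest element, which is precisely the situation that corresponds under $\J(-)$ to adjoining a new global maximum to the indexing forest poset. If instead one tried to add a maximum above a poset with several maximal elements, the result would not in general be an order-ideal lattice of a forest, so keeping track of this is essential.

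Finally, I would invoke the fundamental theorem of finite distributive lattices (Birkhoff): a finite poset is a distributive lattice if and only if it is the lattice of order ideals of its poset of join-irreducible elements. Combined with the inductive construction above, this shows every interval is a distributive lattice and that the underlying forest poset may be taken to be its poset of join-irreducibles; one checks inductively that this poset of join-irreducibles is indeed a forest poset, since the two operations on intervals correspond under $\J(-)$ exactly to the two generating operations (disjoint union, adjoining a maximum) on forest posets. This closes the induction and proves the proposition.
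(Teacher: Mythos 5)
Your proof is correct and follows essentially the same route as the paper: induction via the corollary that intervals are built by adding a maximum or taking products, with these operations matching adjoining a maximum and disjoint union on the underlying forest poset of join-irreducibles. Your extra care about the width-$1$ case is fine but not strictly needed, since any lattice of order ideals $\J(P)$ already has a unique maximum, so adjoining a new top element always corresponds to adjoining a new top to $P$.
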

\begin{proof}
  By the results above, the intervals in $\P_{m,n}$ can be obtained
  from smaller intervals using two operations, namely adding a maximum
  and taking a product. These two operations correspond to adding a
  maximum or taking the disjoint union, on the poset of
  join-irreducible elements. The statement follows by induction.
\end{proof}

For $m=1$, this property of intervals was already obtained in \cite[\S 2]{csar14}.

\begin{coro}
  \label{moeb}
  Möbius numbers of intervals in posets $\P_{m,n}$ belong to $\{-1,0,1\}$.
\end{coro}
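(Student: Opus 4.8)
The plan is to induct along the iterative construction of intervals furnished by the preceding corollary, keeping track of the Möbius number of an interval $A$, by which I mean $\mu_A(A^-,A^+)$, or equivalently the value $\mu_L(\hat 0,\hat 1)$ of the interval regarded as a bounded poset $L$; since this quantity depends only on the poset structure of $A$, I may freely replace $A$ by anything isomorphic to it using the results proved above.

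First I would record how the two building operations affect this number. For a nontrivial finite product $L=L_1\times\cdots\times L_r$, multiplicativity of the Möbius function gives $\mu_L(\hat 0,\hat 1)=\prod_{i=1}^{r}\mu_{L_i}(\hat 0,\hat 1)$, so that a product of intervals whose Möbius numbers lie in $\{-1,0,1\}$ again has Möbius number in $\{-1,0,1\}$. For the other operation, suppose $L$ is obtained from a bounded poset $L'$ by adjoining a new maximum $\hat 1$ covering only the former maximum $\hat 1'$ — the situation of an initial-final interval of width $1$, by the width analysis above. Writing $\hat 0$ for the common minimum, the defining recursion of $\mu$ yields
\[
  \mu_L(\hat 0,\hat 1)=-\sum_{x\in L,\ x<\hat 1}\mu_L(\hat 0,x)=-\sum_{x\in L'}\mu_{L'}(\hat 0,x),
\]
which is $-1$ when $L'$ is a single point and $0$ as soon as $|L'|\ge 2$ (since then $\sum_{x\in L'}\mu_{L'}(\hat 0,x)=0$); in either case the result lies in $\{-1,0,1\}$.

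With these two computations the corollary follows by induction on the construction: the one-element interval has Möbius number $1$; a nontrivial product stays in $\{-1,0,1\}$ by multiplicativity; and every initial-final interval of width $1$ is a strictly shorter interval with a maximum adjoined covering a single element, hence has Möbius number $0$ or $-1$. I do not anticipate a genuine obstacle; the only delicate point is the base case of the ``adjoin a maximum'' step, where for a one-element $L'$ the telescoping sum fails to vanish and produces $-1$ rather than $0$, together with checking that the width-$1$ reduction genuinely presents these intervals in the required form (a shorter interval plus a top that covers nothing but the old top). As an alternative that sidesteps the induction, one may invoke the preceding proposition directly: each interval is isomorphic to the lattice $J(P)$ of order ideals of a finite poset $P$, and for such a lattice $\mu_{J(P)}(\hat 0,\hat 1)$ equals $(-1)^{|P|}$ if $P$ is an antichain and $0$ otherwise — for instance by Rota's crosscut theorem applied to the atoms of $J(P)$, which are the principal ideals on the minimal elements of $P$, so that a family of atoms joins to $\hat 1$ only when $P$ itself is an antichain — and this value is patently in $\{-1,0,1\}$.
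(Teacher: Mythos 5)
Your proof is correct and takes essentially the same route as the paper: the paper's one-line argument is precisely that the property is preserved by the two building operations of the preceding corollary (product and adjoining a maximum), and you simply make explicit the computations — multiplicativity for products, and $\mu=-1$ or $0$ when a new top is adjoined over the old maximum. Your alternative via order ideals, $\mu_{J(P)}(\hat 0,\hat 1)=(-1)^{|P|}$ if $P$ is an antichain and $0$ otherwise, is also valid but not needed.
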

\begin{proof}
  This property is preserved by adding a maximum or taking a product.
\end{proof}

\bibliographystyle{alpha}
\bibliography{note_aval_chap}

{Fr\'ed\'eric Chapoton} \\
{Institut de Recherche Mathématique Avancée, CNRS UMR 7501, Université de Strasbourg, F-67084 Strasbourg Cedex, France} \\
{chapoton@unistra.fr}

{Jean-Christophe Aval} \\
{LaBRI, Université de Bordeaux, 351, cours de la Libération, 33405 Talence Cedex, France} \\
{jean-christophe.aval@labri.fr}

\end{document}